
\documentclass[reqno]{amsart}
\usepackage{float}
\usepackage{amsmath}
\usepackage{graphicx}
\usepackage{latexsym}
\usepackage{amsfonts}
\usepackage{amssymb}

\setcounter{MaxMatrixCols}{10}

\theoremstyle{plain}
\newtheorem{theorem}{Theorem}
\newtheorem{corollary}[theorem]{Corollary}
\newtheorem{lemma}[theorem]{Lemma}

\theoremstyle{definition}

\newtheorem{remark}[theorem]{Remark}
\newtheorem*{remark*}{Remark}

\begin{document}
\title[Sudden Extinction of critical BPRE]{ Sudden extinction of a critical
branching process in random environment}
\thanks{Supported in part by the DFG-RFBR grant 08-01-91954}
\author[Vatutin]{Vladimir A. Vatutin}
\address{Steklov Mathematical Institute RAS, Gubkin street 8, 19991, Moscow
\\
Russia}
\email{vatutin@mi.ras.ru}
\author[Wachtel]{Vitali Wachtel}
\address{ Bereich M5, Technische Universit{\"{a}}t M{\"{u}}nchen, Zentrum
Mathematik, D-85747 Garching bei M{\"{u}}nchen}
\email{wachtel@ma.tum.de}
\date{\today }

\begin{abstract}
Let $T$ be the extinction moment of a critical branching process $Z=\left(
Z_{n},n\geq 0\right) $ in a random environment specified by iid probability
generating functions. We study the asymptotic behavior of the probability of
extinction of the process $Z$ at moment $n\rightarrow \infty ,$ and show
that if the logarithm of the (random) expectation of the offspring number
belongs to the domain of attraction of a non-gaussian stable law then the
extinction occurs owing to very unfavorable environment forcing the process,
having at moment $T-1$ exponentially large population, to die out. We also
give an interpretation of the obtained results in terms of random walks in
random environment.

{\small \textit{Key words and phrases}\quad Branching processes in random
environment, random walk in random environment, local time, limit theorems,
overshoots, undershoots, conditional limit theorems}
\end{abstract}

\maketitle

\section{Introduction and mains results}

We consider a branching process in a random environment specified by a
sequence of independent identically distributed random offspring generating
functions
\begin{equation}
f_{n}(s):=\sum_{k=0}^{\infty }f_{nk}s^{k},\quad n\geq 0.  \label{DefF}
\end{equation}%
Denoting by $Z_{n}$ the number of particles in the process at time $n$ we
define it's evolution by the relations
\begin{align*}
& Z_{0}:=1, \\
& \mathbf{E}[s^{Z_{n+1}}|f_{0},f_{1},\ldots ,f_{n};Z_{0},Z_{1},\ldots
,Z_{n}]:=(f_{n}(s))^{Z_{n}},\quad n\geq 0.
\end{align*}

Put $X_{k}:=\log f_{k-1}^{\prime }(1)$, $k\geq 1,$ and denote $S_{0}:=0$, $%
S_{n}:=X_{1}+X_{2}+\ldots +X_{n}$. Following \cite{AGKV05} we call the
process $Z:=\left\{ Z_{n},\,n\geq 0\right\} $ \textit{critical} if and only
if the random walk $S:=\left\{ S_{n},n\geq 0\right\} $ is oscillating, that
is,
\begin{equation*}
\limsup_{n\rightarrow \infty }S_{n}=\infty \ \text{ and }\
\liminf_{n\rightarrow \infty }S_{n}=-\infty
\end{equation*}%
with probability $1$. This means that the stopping time
\begin{equation*}
T^{-}:=\min \{k\geq 1:S_{k}<0\}
\end{equation*}%
is finite with probability $1$ and, as a result (see \cite{AGKV05}), the
extinction moment
\begin{equation*}
T:=\min \{k\geq 1:Z_{k}=0\}
\end{equation*}%
of the process $Z$ is finite with probability $1$. For this reason it is
natural to study the asymptotic behavior of the survival probability $%
\mathbf{P}(T>n)$ as $n\rightarrow \infty .$ This has been done in \cite%
{AGKV05}: If
\begin{equation}
\lim_{n\rightarrow \infty }\mathbf{P}\left( S_{n}>0\right) =:\rho \in (0,1),
\label{Spit}
\end{equation}%
then (under some mild additional assumptions)
\begin{equation}
\mathbf{P}(T>n)\sim \theta \mathbf{P}(T^{-}>n)=\theta \frac{l(n)}{n^{1-\rho }%
},  \label{tails}
\end{equation}%
where $l(n)$ is a slowly varying function and $\theta $ is a known positive
constant whose explicit expression is given by formula (4.10) in \cite%
{AGKV05}.\footnote{%
We write $a_{n}\sim b_{n}$ if $\lim_{n\rightarrow \infty }\left(
a_{n}/b_{n}\right) =1$.}

A local version of (\ref{tails}) was obtained in \cite{VD97}, where it was
established that if the offspring generating functions $f_{n}(s),n=0,1,...,$
are fractional-linear with probability~$1$ and (along with some other
conditions) $\mathbf{E}X_{n}=0$ and $VarX_{n}\in (0,\infty )$ then
\begin{equation}
\mathbf{P}(T=n)\sim \theta \mathbf{P}(T^{-}=n)\sim \frac{C}{n^{3/2}}.
\label{vdd}
\end{equation}

The aim of the present paper is to refine equivalence (\ref{tails}) and to
complement (\ref{vdd}) by investigating the asymptotic behavior of the
probability $\mathbf{P}(T=n)$ as $n\rightarrow \infty $ in the case $%
VarX_{n}=\infty $. In addition, we consider the asymptotic behavior of the
joint distribution of the random variables $T$ and $Z_{T-1}$.

Let
\begin{equation*}
\mathcal{A}:=\{0<\alpha <1;\,|\beta |<1\}\cup \{1<\alpha <2;|\beta |\leq
1\}\cup \{\alpha =1,\beta =0\}\cup \{\alpha =2,\beta =0\}
\end{equation*}%
be a subset in $\mathbb{R}^{2}.$ For $(\alpha ,\beta )\in \mathcal{A}$ and a
random variable $X$ we write $X\in \mathcal{D}\left( \alpha ,\beta \right) $
if the distribution of $X$ belongs to the domain of attraction of a stable
law with characteristic function%
\begin{equation}
G_{\alpha ,\beta }\mathbb{(}t\mathbb{)}:=\exp \left\{ -c|t|^{\,\alpha
}\left( 1-i\beta \frac{t}{|t|}\tan \frac{\pi \alpha }{2}\right) \right\} ,\
c>0,  \label{std}
\end{equation}%
and, in addition, $\mathbf{E}X=0$ if this moment exists. Hence, there exists
a sequence $\left\{ c_{n},n\geq 1\right\} $ such that $c_{n}^{-1}S_{n}$
converges in distribution to the stable law whose with characteristic
function is specified by (\ref{std}). Observe that if $X_{n}\overset{d}{=}%
X\in \mathcal{D}\left( \alpha ,\beta \right) $ then (see,$\ $for instance,
\cite{Zol57}) the quantity $\rho $ in (\ref{Spit}) is calculated by the
formula
\begin{equation}
\displaystyle\rho =\left\{
\begin{array}{ll}
\frac{1}{2},\ \text{if \ }\alpha =1, &  \\
\frac{1}{2}+\frac{1}{\pi \alpha }\arctan \left( \beta \tan \frac{\pi \alpha
}{2}\right) ,\text{ otherwise}. &
\end{array}%
\right.   \label{ro}
\end{equation}%
Introduce the following basic assumption:

\textbf{Condition} $A:$ random variables $\left\{ X_{n}=\log f_{n-1}^{\prime
}(1),n\geq 1\right\} $ are independent copies of $X\in \mathcal{D}\left(
\alpha ,\beta \right) $ with $\alpha <2$ and $\left\vert \beta \right\vert
<1 $.

Now we formulate our first result.

\begin{theorem}
\label{T1} Assume that the offspring generating functions are geometric,
i.e.,
\begin{equation}
f_{n-1}(s):=\frac{e^{-X_{n}}}{1+e^{-X_{n}}-s},\,n=1,2,\ldots  \label{geom}
\end{equation}%
with $\{X_{n},n\geq 1\}$ satisfying Condition $A$. Then
\begin{equation}
\mathbf{P}(T=n)\sim \theta \mathbf{P}(T^{-}=n)\sim \theta (1-\rho )\frac{l(n)%
}{n^{2-\rho }}\quad \text{as }n\rightarrow \infty .  \label{T1.1}
\end{equation}
\end{theorem}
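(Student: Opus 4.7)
The plan rests on the explicit generating-function identity available for geometric offspring. The one-step relation
\[
\frac{1}{1-f_{k-1}(s)} = 1 + \frac{e^{-X_k}}{1-s}
\]
can be iterated along the composition $f_0 \circ f_1 \circ \cdots \circ f_{n-1}$ to give the classical formula
\[
\mathbf{P}(T > n \mid f_0, f_1, \ldots) = \frac{1}{V_n}, \qquad V_n := \sum_{j=0}^{n} e^{-S_j},
\]
and the annealed representation
\[
\mathbf{P}(T=n) = \mathbf{E}\!\left[\frac{e^{-S_n}}{V_{n-1}\,(V_{n-1}+e^{-S_n})}\right].
\]
I would then decompose this expectation by intersecting with the three events $\{T^- = n\}$, $\{T^- < n\}$ and $\{T^- > n\}$, aiming to show that only the first contributes at order $l(n)/n^{2-\rho}$.

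For the main contribution, on $\{T^- = n\}$ one has $V_{n-1} \le n$ (since $e^{-S_j} \le 1$ for $j < n$), while under Condition~$A$ the overshoot $|S_n|$ at first passage below zero is typically of order $c_n \to \infty$ by the classical fluctuation theory for heavy-tailed walks. Hence $e^{-S_n}$ dwarfs $V_{n-1}$ and the integrand simplifies to $V_{n-1}^{-1}(1+o(1))$. Conditionally on $\{T^- > n-1\}$, the walk $(S_k)_{k \le n-1}$ converges, after a Doob $h$-transform, to the walk $\hat{S}$ conditioned to stay nonnegative forever, so $V_{n-1}$ converges in distribution to $V_\infty := 1 + \sum_{k \ge 1} e^{-\hat{S}_k}$. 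The quantity $V_{n-1}$ is, up to negligible tails, a functional of the germ of the walk at the origin, whereas $\{T^- = n\}$ depends only on the bulk variable $S_{n-1}$ (through the tail $\mathbf{P}(X < -S_{n-1})$); the two asymptotically decouple, yielding
\[
\mathbf{E}\!\left[V_{n-1}^{-1};\,T^- = n\right] \sim \mathbf{E}[V_\infty^{-1}]\,\mathbf{P}(T^- = n).
\]
Coupled with the local limit theorem $\mathbf{P}(T^- = n) \sim (1-\rho) l(n)/n^{2-\rho}$ from fluctuation theory, this yields (\ref{T1.1}); the constant $\mathbf{E}[V_\infty^{-1}]$ is identified with the $\theta$ of (\ref{tails}) by applying the same meander argument to the formula $\mathbf{P}(T > n) = \mathbf{E}[V_n^{-1}]$.

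The main obstacle will be showing that the contributions from $\{T^- > n\}$ and $\{T^- < n\}$ are genuinely of smaller order than $l(n)/n^{2-\rho}$. For $\{T^- > n\}$ I would bound the integrand by $e^{-S_n}/V_{n-1}^2$ and integrate against the joint limit law of $(V_{n-1}, S_n/c_n)$ on the positive meander, extracting an extra power of $c_n$ from the behaviour of the meander density near the origin. For $\{T^- < n\}$ the natural tool is the strong Markov property at $T^-$: on $\{T^- = k\}$ the process restarts in a fresh environment with a random number of particles, and its extinction at time $n$ can be written as a convolution in $k$ of $\mathbf{P}(T^- = k)$ with a second factor controlled by the same explicit formula. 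The technical heart lies in obtaining uniform estimates for intermediate $k \in [\varepsilon n, (1-\varepsilon)n]$, where neither $k$ nor $n-k$ is small; here one needs precise control of the joint law of $S_{T^-}$ and the post-minimum walk.
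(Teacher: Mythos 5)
Your starting point is the same as the paper's: the geometric structure gives $\mathbf{P}_f(T=n)=e^{-S_n}/(V_{n-1}V_n)$, and the asymptotics should come from separating the trajectories according to whether the walk ends with a large negative jump. The treatment of $\{T^{-}=n\}$ is sound in outline (on that event $V_{n-1}$ stays bounded, $e^{-S_n}$ is huge by Lemma~\ref{LdderHeight}, and the germ/bulk decoupling you invoke is exactly what Lemma~\ref{LappbasicLocal} makes rigorous), and the $\{T^{-}>n\}$ contribution is indeed negligible --- on that event $e^{2L_n-S_n}=e^{-S_n}$ dominates the integrand, and Lemma~\ref{LMinMax} together with \eqref{H2} gives $o(\mathbf{P}(T^{-}=n))$.

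The gap is in the claim that $\{T^{-}<n\}$ is of smaller order, and in the resulting identification $\theta=\mathbf{E}^{+}[V_{\infty}^{-1}]$. Both are false. The paper does not split by $T^{-}$ but by the position of the running minimum: first $\{\mu_n<n\}$ vs.\ $\{\mu_n=n\}$, and then, on $\{\mu_n=n\}$, by $\{\mu_{n-1}=k\}$, $k\ge 0$. The term $k=0$ is exactly your $\{T^{-}=n\}$ and yields the constant $\mathbf{E}^{+}[1-Q^{+}]=\mathbf{E}^{+}[V_{\infty}^{-1}]$. But the terms $k\ge 1$ --- trajectories with $T^{-}\le k<n$, the pre-$k$ piece being a strict descending ladder segment, followed by the post-$k$ increments staying nonnegative until plunging to a new record minimum at time $n$ --- are each of order $\mathbf{P}(T^{-}=n)$ and strictly positive, and they add up to the full $\theta=\sum_{k\ge 0}\mathbf{E}\bigl[\mathbf{E}^{+}\bigl(1-F_{0,k}(Q^{+})\bigr);\mu_k=k\bigr]$ from \eqref{NUU}. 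Controlling the tail of this series in $k$ is exactly what Lemma~\ref{LtailSer} is for; if only $k=0$ mattered, that lemma would be superfluous. The same omission invalidates your ``same meander argument'' for $\mathbf{P}(T>n)=\mathbf{E}[V_n^{-1}]$: the pieces with $\mu_n\ge 1$ (hence $T^{-}\le n$) again contribute at the leading order, which is precisely why the $\theta$ of \cite{AGKV05} is a series and not the single number $\mathbf{E}^{+}[V_{\infty}^{-1}]$. You even hint at a convolution over $\{T^{-}=k\}$ in the last paragraph, which is in direct tension with the earlier claim that $\{T^{-}<n\}$ contributes negligibly; had you pursued that convolution to its conclusion, you would have been forced to resum contributions of the same order and would have recovered the extra terms. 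To repair the argument, replace the $T^{-}$-decomposition by the $\mu_n$/$\mu_{n-1}$-decomposition, invoke Lemma~\ref{LappbasicLocal} for the conditional limits on $\{T^{-}=n-k\}$, and sum over $k$ with Lemma~\ref{LtailSer} to justify interchanging limit and sum.
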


\begin{remark}
In the case of geometric offspring distributions one has an explicit formula
for the conditional probability of the event $\{T=n\}$ given the environment
$f_{0},f_{1},\ldots ,f_{n-1}$ in terms of an exponential functional of the
associated random walk $\{S_{k},k\geq 0\}$, see (\ref{condPr}) below. Thus,
the analysis of the extinction probability $\mathbf{P}\left( T=n\right) $ in
this case is reduced  to the study of the expectation of a certain
functional of the associated random walk.
\end{remark}

We now turn to the joint distribution of $T$ and the size $Z_{T-1}$. Here we
don't restrict ourselves to the case of geometric reproduction laws. To
formulate our result we set
\begin{equation*}
\zeta (b):=e^{-2X_{1}}\sum_{k=b}^{\infty }k^{2}f_{0k},\,b=0,1,\ldots
\end{equation*}%
and let $\Lambda :=\left\{ \Lambda _{t},\,0\leq t\leq 1\right\} $ denote the
meander of a strictly stable process with parameters $\alpha ,\beta $, i.e.,
a strictly stable Levy process conditioned to stay positive on the time
interval $(0,1]$ (see \cite{Don85} and \cite{Dur78} for details). Along with
the meander $\Lambda $ consider a stochastic process $\tilde{\Lambda}%
:=\left\{ \tilde{\Lambda}_{t},\,0\leq t\leq 1\right\} $ defined by
\begin{equation*}
\mathbf{E}\left[ \phi \left( \tilde{\Lambda}\right) \right] =\frac{\mathbf{E}%
\left[ \Lambda _{1}^{-\alpha }\phi \left( \Lambda \right) \right] }{\mathbf{E%
}\left[ \Lambda _{1}^{-\alpha }\right] }\text{ for any }\phi \in D\left[ 0,1%
\right] ,
\end{equation*}%
where $D\left[ 0,1\right] $ denotes the space of c\`{a}dl\`{a}g functions on
the unit interval.

\begin{theorem}
\label{T3} Assume that Condition $A$ is valid and there exists $\delta >0$
such that%
\begin{equation*}
\mathbf{E}\left( \log ^{+}\zeta (b)\right) ^{\alpha +\delta }<\infty
\end{equation*}%
for some $b\geq 0$. Then, for every $x>0$,
\begin{equation}
\lim_{n\rightarrow \infty }\frac{\mathbf{P}(Z_{n-1}>e^{xc_{n}};T=n)}{\mathbf{%
P}(T^{-}=n)}=\theta \mathbf{P}\left( \tilde{\Lambda}_{1}>x\right) .
\label{T3.2}
\end{equation}
\end{theorem}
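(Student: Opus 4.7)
The plan is to integrate out the last environment and reduce the joint event $\{T=n,\,Z_{n-1}>e^{xc_n}\}$ to a one-dimensional computation against the meander limit of the associated random walk. Since $f_{n-1}$ is independent of $(f_0,\ldots,f_{n-2})$ and hence of $Z_{n-1}$, and since $\{T=n\}=\{Z_{n-1}>0\}\cap\{Z_n=0\}$ has conditional probability $f_{n-1}(0)^{Z_{n-1}}$ given the history, one obtains
\[
\mathbf{P}(T=n,\,Z_{n-1}>e^{xc_n})=\mathbf{E}\bigl[g(Z_{n-1})\,\mathbf{1}\{Z_{n-1}>e^{xc_n}\}\bigr],\qquad g(k):=\mathbf{E}[f_0(0)^k].
\]
This cleanly separates the final-extinction factor $g$ from the process history up to time $n-1$.

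Next I would sharpen the asymptotics of $g$. Put $\eta:=1-f_0(0)\in[0,e^{X_1}]$: convexity gives $\eta\le e^{X_1}$, and a Taylor expansion of $f_0$ at $s=1$ together with $f_0''(\xi)\le\sum_k k^2 f_{0k}=e^{2X_1}\zeta(0)$ yields $\eta=e^{X_1}\bigl(1-O(e^{X_1}\zeta(0))\bigr)$. The integrability assumption $\mathbf{E}(\log^+\zeta(b))^{\alpha+\delta}<\infty$ makes the Taylor error negligible in the relevant regime, and a Tauberian argument for $\mathbf{E}[\exp(-k\eta)]$ — using the left tail $\mathbf{P}(X_1<-u)\sim c_-L(u)/u^\alpha$ supplied by Condition~$A$ — produces
\[
g(k)\sim\frac{c_- L(\log k)}{(\log k)^\alpha}\qquad(k\to\infty),
\]
with enough uniformity that $c_n^\alpha g(e^{yc_n})/L(c_n)\to c_- y^{-\alpha}$ on compact subsets of $(0,\infty)$.

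On the evolution side, conditional on $T>n-1$ (equivalently, $Z_{n-1}>0$), the classical meander limit of Doney/Durrett combined with $\mathbf{P}(T>n)\sim\theta\mathbf{P}(T^->n)$ from \cite{AGKV05} yields $\{c_n^{-1}S_{[nt]}\}_{0\le t\le 1}\Rightarrow\Lambda$, and the same moment hypothesis on $\zeta$ forces $\log Z_{n-1}-S_{n-1}$ to be tight under the conditioning, so that $c_n^{-1}\log Z_{n-1}\Rightarrow\Lambda_1$. Inserting this into the first display,
\[
\mathbf{E}\bigl[g(Z_{n-1})\mathbf{1}\{Z_{n-1}>e^{xc_n}\}\bigr]\sim \mathbf{P}(T>n-1)\cdot \frac{c_- L(c_n)}{c_n^\alpha}\cdot \mathbf{E}\bigl[\Lambda_1^{-\alpha}\mathbf{1}\{\Lambda_1>x\}\bigr].
\]
The stable normalisation $nL(c_n)/c_n^\alpha\to\mathrm{const}$, together with $\mathbf{P}(T>n-1)\sim\theta l(n)/n^{1-\rho}$ from \cite{AGKV05} and $\mathbf{P}(T^-=n)\sim(1-\rho)l(n)/n^{2-\rho}$, collapses the prefactors — via a standard identity from stable-process theory relating $\mathbf{E}[\Lambda_1^{-\alpha}]$ to $c_\pm$ and $\rho$ — into exactly $\theta/\mathbf{E}[\Lambda_1^{-\alpha}]$, delivering the claim upon recognising $\mathbf{E}[\Lambda_1^{-\alpha}\mathbf{1}\{\Lambda_1>x\}]/\mathbf{E}[\Lambda_1^{-\alpha}]=\mathbf{P}(\tilde\Lambda_1>x)$ from the definition of $\tilde\Lambda$.

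The principal obstacle is the second step: the leading behaviour of $g(k)$ lies only at the slowly-varying level $L(\log k)/(\log k)^\alpha$, so the Taylor error when replacing $\eta$ by $e^{X_1}$ must be controlled sharply — this is precisely where the hypothesis $\mathbf{E}(\log^+\zeta(b))^{\alpha+\delta}<\infty$ is essential. A secondary difficulty is interchanging the limit with the integral against the singular weight $\Lambda_1^{-\alpha}$; this requires uniform domination of $g$ near the origin, together with finiteness of $\mathbf{E}[\Lambda_1^{-\alpha}]$ for the meander under Condition~$A$ — the very property that makes the definition of $\tilde\Lambda$ meaningful.
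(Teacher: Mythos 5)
Your high-level strategy is the same as the paper's: condition out the last environment to write $\mathbf{P}(T=n,\,Z_{n-1}>e^{xc_n})=\mathbf{E}[g(Z_{n-1})\mathbf{1}\{Z_{n-1}>e^{xc_n}\}]$ with $g(k)=\mathbf{E}[f_0(0)^k]=\mathbf{P}(Z_1=0\,|\,Z_0=k)$, show $g(k)\sim\mathbf{P}(X_1\le -\log k)$, plug in the conditional limit $c_n^{-1}\log Z_{n-1}\Rightarrow\Lambda_1$ (Corollary~1.6 of \cite{AGKV05}), and collapse the normalising constants using $\mathbf{E}[\Lambda_1^{-\alpha}]=(1-\rho)\alpha/q(2-\alpha)$, $\mathbf{P}(X\le -c_n)\sim q(2-\alpha)/(\alpha n)$, and $\mathbf{P}(T^-=n)\sim(1-\rho)\mathbf{P}(T^->n-1)/n$. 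All of that matches the paper. (Your worry about domination near the origin is unfounded: the cutoff $\Lambda_1>x$ with $x>0$ caps the weight at $x^{-\alpha}$, so dominated convergence is immediate.)

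The genuine gap is in your treatment of the key asymptotic $g(k)\sim\mathbf{P}(X_1<-\log k)$. You try to control $\eta:=1-f_0(0)$ from below by a Taylor expansion of $f_0$ at $1$, giving $\eta\ge e^{X_1}\bigl(1-\tfrac12 e^{X_1}\zeta(0)\bigr)$. This bound is vacuous (negative) precisely in the regime you need to rule out, namely when $e^{X_1}\zeta(0)\ge 1$, and the hypothesis $\mathbf{E}(\log^+\zeta(b))^{\alpha+\delta}<\infty$ only gives a polylogarithmic tail on $\zeta$ while $e^{X_1}$ is of size $k^{-1+\varepsilon}$ in the regime contributing to $g(k)$, so you cannot exclude $e^{X_1}\zeta$ being large there. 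Moreover, your bound involves $\zeta(0)$, whereas the hypothesis is on $\zeta(b)$ for some possibly positive $b$; $\log^+\zeta(0)$ need not have the $(\alpha+\delta)$-th moment under Condition~$A$. The paper avoids both problems by using the Paley--Zygmund (second-moment) inequality applied to the offspring count: $\mathbf{P}(Y>0)\ge(\mathbf{E}Y)^2/\mathbf{E}Y^2$, combined with the elementary split $\mathbf{E}Y^2/(\mathbf{E}Y)^2\le b/\mathbf{E}Y+\zeta(b)$, which yields the unconditional lower bound $\eta\ge 1/(be^{-X_1}+\zeta(b))$ and hence $f_{00}^k\le\exp\{-k/(be^{-X_1}+\zeta(b))\}$. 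From this, truncating at $X_1>-(1-\varepsilon)\log k$ and $\zeta(b)\le k^{1-\varepsilon}$ and using the Markov inequality on $\log^+\zeta(b)$ gives the required upper bound for $g(k)$. You should replace the Taylor step by this second-moment argument; the rest of your outline then goes through.
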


\begin{remark}
It is easy to see that $\zeta (2)\leq 4$ for the geometric offspring
distributions. Therefore, the statement of Theorem~\ref{T3} holds in this
case. Moreover, in view of (\ref{T1.1}),
\begin{equation}
\lim_{x\downarrow 0}\lim_{n\rightarrow \infty }\frac{\mathbf{P}(Z_{n-1}\leq
e^{xc_{n}};T=n)}{\mathbf{P}(T=n)}=0  \label{ttt}
\end{equation}%
provided that the conditions of Theorem \ref{T1} hold.
\end{remark}

We now complement Theorem \ref{T3} by the following statement being valid
for the geometric offspring distributions.

\begin{theorem}
\bigskip \label{T5}Under the conditions of Theorem \ref{T1}, as $%
n\rightarrow \infty$,
\begin{equation*}
\mathcal{L}\left( \frac{\log Z_{\left[ (n-1)t\right] }}{c_{n}},0\leq t\leq
1\Big|\, T=n\right) \Longrightarrow \mathcal{L}\left( \tilde{%
\Lambda}_{t},0\leq t\leq 1\right) .
\end{equation*}
\end{theorem}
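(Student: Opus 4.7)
The strategy is to reduce the functional convergence of $(\log Z_{[(n-1)t]}/c_n)_{0\le t\le 1}$ under $\mathbf{P}(\cdot\mid T=n)$ to the corresponding convergence for the associated random walk, and then to extract the change of measure $\Lambda_1^{-\alpha}/\mathbf{E}[\Lambda_1^{-\alpha}]$ from the explicit quenched extinction formula available in the geometric case.

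\textbf{Step 1 (Replacing $\log Z_k$ by $S_k$).} In the geometric setting, the ratio $W_k:=Z_k e^{-S_k}$ is a martingale (given the environment), and conditionally on $\{Z_k>0\}$ and the environment it is geometric with mean $1+U_k$, where $U_k:=\sum_{j=1}^{k}e^{-S_j}$. Theorem~\ref{T1} tells us that the event $\{T=n\}$ concentrates on environments for which $(S_k)_{k\le n-1}$ stays positive at scale $c_n$ and $X_n$ is a deep negative jump; on such environments $U_{n-1}$ is $O_p(1)$, and a quenched maximal argument gives $\sup_{k\le n-1}|\log W_k|=O_p(1)$. Dividing by $c_n\to\infty$,
\begin{equation*}
\sup_{0\le t\le 1}\Bigl|\frac{\log Z_{[(n-1)t]}}{c_n}-\frac{S_{[(n-1)t]}}{c_n}\Bigr|\longrightarrow 0\quad\text{in }\mathbf{P}(\cdot\mid T=n)\text{-probability},
\end{equation*}
so it suffices to prove the theorem with $S_{[(n-1)t]}$ in place of $\log Z_{[(n-1)t]}$.

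\textbf{Step 2 (Meander plus $\Lambda_1^{-\alpha}$-weighting).} Starting from the quenched identity
\begin{equation*}
\mathbf{P}(T=n\mid f_0,\ldots,f_{n-1})=\frac{e^{-S_n}}{(1+U_n)(1+U_{n-1})}
\end{equation*}
(the formula referred to in the remark after Theorem~\ref{T1}), integration over $X_n$, which is independent of $(S_0,\ldots,S_{n-1})$, yields
\begin{equation*}
\mathbf{E}\bigl[\Phi(S_{[(n-1)\cdot]}/c_n)\,\mathbf{1}(T=n)\bigr]=\mathbf{E}\Bigl[\Phi(S_{[(n-1)\cdot]}/c_n)\,\frac{1}{1+U_{n-1}}\,\mathbf{E}_{X_n}\Bigl[\frac{e^{-S_n}}{1+U_{n-1}+e^{-S_n}}\Bigr]\Bigr].
\end{equation*}
Using the left tail $\mathbf{P}(X<-x)\sim c_-x^{-\alpha}L(x)$, the inner $X_n$-expectation is asymptotically $\mathbf{P}(X<-S_{n-1})\sim c_-c_n^{-\alpha}L(c_n)\,\Lambda_1^{-\alpha}\sim n^{-1}\Lambda_1^{-\alpha}$ under the meander scaling $S_{n-1}/c_n\to\Lambda_1$. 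Combining this with Doney's functional limit theorem $(S_{[(n-1)t]}/c_n\mid T^{-}>n)\Rightarrow\Lambda$ in $D[0,1]$ and normalizing by $\mathbf{P}(T=n)\sim\theta(1-\rho)l(n)/n^{2-\rho}$ from Theorem~\ref{T1}, the short-time factor $(1+U_{n-1})^{-1}$ (depending on $(S_k)$ at the individual-step scale) becomes asymptotically independent of the macroscopic path and averages into the constant $\theta(1-\rho)$, so the quotient converges to $\mathbf{E}[\Phi(\Lambda)\Lambda_1^{-\alpha}]/\mathbf{E}[\Lambda_1^{-\alpha}]=\mathbf{E}[\Phi(\tilde\Lambda)]$.

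\textbf{Main obstacle.} The principal difficulty is upgrading Step~2 to the functional level: one needs tightness of $(S_{[(n-1)t]}/c_n)$ in $D[0,1]$ under the conditional law $\mathbf{P}(\cdot\mid T=n)$, a uniform-in-$S_{n-1}$ version of the tail estimate for $X_n$ (so that the $\Lambda_1^{-\alpha}$-weighting genuinely passes to the whole limiting trajectory rather than only to its endpoint as in Theorem~\ref{T3}), and a rigorous proof that the short-time functional $(1+U_{n-1})^{-1}$ decouples from the macroscopic path in the limit, thereby cancelling cleanly in the quotient. The singularity of $\Lambda_1^{-\alpha}$ near $0$ and the passage of heavy-tail asymptotics for $X$ inside an unbounded expectation make uniform integrability arguments essential. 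Once these ingredients are established, Step~1 together with the identification of the limit in Step~2 yields convergence in $D[0,1]$.
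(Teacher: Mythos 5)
Your approach differs substantially from the paper's. The paper proves Theorem~\ref{T5} by re-running the argument of Theorem~\ref{T3} with the test functional $\phi(Z^{(n)})$ inserted: it decomposes $\mathbf{E}[\phi(Z^{(n)});T=n]$ as $\sum_k \mathbf{E}[\phi(Z^{(n)});Z_{n-1}=k]\,\mathbf{P}(Z_1=0\mid Z_0=k)$, replaces $\mathbf{P}(Z_1=0\mid Z_0=k)$ by $\mathbf{P}(X<-\log k)$ (the asymptotic equivalence established in the proof of Theorem~\ref{T3}), and then applies the \emph{branching-process} functional limit theorem from Corollary~1.6 of \cite{AGKV05}, namely $\mathcal L(Z^{(n)}\mid Z_{n-1}>0)\Rightarrow\mathcal L(\Lambda)$, directly to $Z^{(n)}$. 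The terms with $0<k\le e^{xc_n}$ are disposed of by (\ref{ttt}), which is where the geometric assumption enters (through Theorem~\ref{T1}). The argument never passes through the random walk $S$: the $\Lambda_1^{-\alpha}$-weighting comes out of the conditional limit theorem for $Z^{(n)}$ itself, and the constant $\theta$ arises from $\mathbf{P}(Z_{n-1}>0)\sim\theta\,\mathbf{P}(T^->n-1)$, not from averaging the environment functional.

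Your plan, by contrast, replaces $\log Z_k$ by $S_k$ and extracts the weighting by integrating $X_n$ out of the quenched extinction formula, which is closer in spirit to the paper's proof of Theorem~\ref{T1}. The two items you list as obstacles are genuine and not resolved by your outline. In Step~1, the quenched-geometric observation controls the upper tail of $W_k=Z_ke^{-S_k}$, but the lower-tail event $\{Z_k=1\}$ makes $|\log W_k|=S_k$, which is of order $c_n$ rather than $O_p(1)$; you would need to argue that such events are negligible uniformly in $k$ (or abandon the uniform-norm reduction in favour of a Skorokhod-metric argument), and this is not addressed. In Step~2, the claim that $(1+U_{n-1})^{-1}$ decouples from the macroscopic path and averages to a constant is precisely the delicate point that the paper handles, in the proof of Theorem~\ref{T1}, via Lemma~\ref{LappbasicLocal} together with a splitting at the global minimum $\mu_n$ and the series estimate of Lemma~\ref{LtailSer}; asserting it here without re-deriving that machinery leaves the proof incomplete. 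The paper's route is shorter precisely because it avoids both issues by staying at the level of the branching process and citing the pre-existing conditional functional limit theorem.
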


Here $\Longrightarrow $ denotes the weak convergence with respect
to the Skorokhod topology in the space $D\left[ 0,1\right] .$

Combining Theorems \ref{T1}, \ref{T3}, and \ref{T5} shows, in particular,
that

\begin{equation}
\lim_{n\rightarrow \infty }\mathbf{P}(Z_{n-1}>e^{xc_{n}}|T=n)=\mathbf{P}%
\left( \tilde{\Lambda}_{1}>x\right) .  \label{T4}
\end{equation}%
in the case when the offspring distributions are geometric. The
last equality, along with Theorem \ref{T5}, allows us to make the
following nonrigorous description of the evolution of a critical
branching process $Z,$ being subject to the conditions of Theorem
\ref{T1}. If the process survives for a long time $(T=n\rightarrow
\infty )$ then $\log Z_{\left[ \left( n-1\right) t\right] }$
grows, roughly speaking, as $c_{n}\tilde{\Lambda}_{t}$
up to moment $n-1$ and then the process instantly extinct. In particular, $%
\log $ $Z_{n-1}$ is of order $c_{n}$ (compare with Corollary 1.6 in \cite%
{AGKV05}). This may be interpreted as the development of the process in a
favorable environment up to the moment $n-1$ and the sudden extinction of
the population at moment $T=n\rightarrow \infty $ because of a very
unfavorable, even "catastrophic" environment at moment $n-1$. At the end of
the paper we show that this phenomenon is in a sharp contrast with the case $%
\mathbf{E}X_{n}=0$, $\sigma ^{2}:=VarX_{n}\in (0,\infty )$. Namely, if,
additionally,
\begin{equation}
\mathbf{E}\left[ \left( 1-f_{00}\right) ^{-1}\right] <\infty ,\quad \mathbf{E%
}\left[ f_{00}^{-1}\right] <\infty ,  \label{add}
\end{equation}%
then
\begin{equation}
\lim_{N\rightarrow \infty }\limsup_{n\rightarrow \infty }\mathbf{P}%
(Z_{n-1}>N|T=n)=0,  \label{eee}
\end{equation}%
while (see Corollary 1.6 in \cite{AGKV05})
\begin{equation*}
\mathcal{L}\left( \frac{\log Z_{\left[ \left( n-1\right) t\right]
}}{\sigma \sqrt{n}},0\leq t\leq 1\Big|\,Z_{n-1}>0\right)
\Longrightarrow \mathcal{L}\left( W_{t}^{+},0\leq t\leq 1\right)
\end{equation*}%
where $W^{+}:=\left\{ W_{t}^{+},0\leq t\leq 1\right\} $ is the Brownian
meander.

These facts demonstrate that the phenomenon of "sudden extinction" in a
favorable environment is absent for the case $\sigma ^{2}<\infty $.
Moreover, one can say that in this case we observe a "natural" extinction of
the population. Indeed, the extinction occurs at moment $T=n$ because of the
small size of the population in the previous generation rather than under
the pressure of the environment.

In the present paper we deal with the annealed approach. As shown in \cite%
{VK08}, one can not see the phenomenon of "sudden extinction" under the
quenched approach even if the conditions of Theorem \ref{T1} are valid. A
"typical" trajectory of a critical branching process in random environment
under the quenched approach oscillates before the extinction. The process
passes through a number of bottlenecks corresponding to the strictly
descending moments of the associated random walk and dies in a "natural" way
because of the small number of individuals in generation $T-1$. Just as
under the annealed approach for the case $\sigma ^{2}<\infty $ (see \cite%
{VK08} for a more detailed discussion).

Another consequence of Theorem~\ref{T3} is the following lower bound for $%
\mathbf{P}(T=n)$.

\begin{corollary}
\label{Col1}Under the conditions of Theorem \ref{T3},
\begin{equation}
\liminf_{n\rightarrow \infty }\frac{\mathbf{P}(T=n)}{\mathbf{P}(T^{-}=n)}%
=\theta .  \label{T3.1}
\end{equation}
\end{corollary}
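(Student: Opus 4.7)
My plan is to prove the two inequalities $\liminf\geq\theta$ and $\liminf\leq\theta$ separately. The lower bound will use Theorem~\ref{T3} through the obvious inclusion $\{Z_{n-1}>e^{xc_{n}};\,T=n\}\subset\{T=n\}$, while the upper bound will follow from the integrated asymptotics~(\ref{tails}) by a short summation argument.

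For the lower bound, I fix $x>0$, bound $\mathbf{P}(T=n)\geq\mathbf{P}(Z_{n-1}>e^{xc_{n}};\,T=n)$, divide by $\mathbf{P}(T^{-}=n)$, and invoke Theorem~\ref{T3} to obtain
$$\liminf_{n\to\infty}\frac{\mathbf{P}(T=n)}{\mathbf{P}(T^{-}=n)}\geq\theta\,\mathbf{P}(\tilde{\Lambda}_{1}>x).$$
Then I let $x\downarrow 0$. The only point requiring care is that $\mathbf{P}(\tilde{\Lambda}_{1}>0)=1$; this follows because the law of $\tilde{\Lambda}_{1}$ is absolutely continuous with respect to the law of $\Lambda_{1}$ (it has Radon--Nikodym derivative $\Lambda_{1}^{-\alpha}/\mathbf{E}[\Lambda_{1}^{-\alpha}]$), while $\Lambda_{1}>0$ almost surely by definition of the meander. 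Consequently $\mathbf{P}(\tilde{\Lambda}_{1}>x)\to 1$ as $x\downarrow 0$, and the desired lower bound follows.

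For the upper bound, I introduce $a_{n}:=\mathbf{P}(T=n)$, $b_{n}:=\mathbf{P}(T^{-}=n)$, $A_{n}:=\mathbf{P}(T>n)$ and $B_{n}:=\mathbf{P}(T^{-}>n)$, so that~(\ref{tails}) reads $A_{n}/B_{n}\to\theta$. If one had $\liminf a_{n}/b_{n}>\theta$, there would exist $\varepsilon>0$ and $N$ with $a_{n}\geq(\theta+\varepsilon)b_{n}$ for every $n\geq N$; summing over $k>n\geq N$ would yield $A_{n}\geq(\theta+\varepsilon)B_{n}$, contradicting $A_{n}/B_{n}\to\theta$. Hence $\liminf a_{n}/b_{n}\leq\theta$, and combined with the previous step this gives the claim.

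The substantive input is the lower bound, which rests entirely on the depth of Theorem~\ref{T3}; once that is in hand, the passage $x\downarrow 0$ and the Tauberian-style upper bound are both routine and I expect no further obstacle. I emphasize that this approach yields only the limit inferior: the sharper asymptotic $\mathbf{P}(T=n)\sim\theta\mathbf{P}(T^{-}=n)$ obtained in the geometric case of Theorem~\ref{T1} requires a matching upper bound on $\mathbf{P}(T=n)$ that the present strategy does not produce, which is why the corollary is stated in this weaker form in general.
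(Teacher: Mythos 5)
Your proposal is correct and follows essentially the same two-step argument as the paper: the lower bound comes from letting $x\downarrow 0$ in the conclusion of Theorem~\ref{T3}, and the upper bound comes from the observation that $\mathbf{P}(T=n)\geq(\theta+\varepsilon)\mathbf{P}(T^{-}=n)$ for all large $n$ would, after summing the tails, contradict~(\ref{tails}). You merely spell out a couple of details (the absolute continuity argument giving $\mathbf{P}(\tilde\Lambda_1>x)\to 1$, and the explicit contrapositive formulation of the summation step) that the paper leaves implicit.
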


We conjecture that the relation $\mathbf{P}(T=n)\sim \theta \mathbf{P}%
(T^{-}=n)$ is valid for any critical branching processes in random
environment meeting the conditions of Theorem \ref{T3}, i.e., without the
assumption that the offspring distributions are geometric. With Theorem~\ref%
{T3} in hands, one can easily infer that our conjecture is equivalent to the
equality
\begin{equation*}
\lim_{\varepsilon \rightarrow 0}\lim_{n\rightarrow \infty }\frac{\mathbf{P}%
(Z_{n-1}\leq e^{\varepsilon c_{n}};T=n)}{\mathbf{P}(T^{-}=n)}=0.
\end{equation*}%
But this is exactly the phenomenon of "sudden extinction" described above.
\vspace{12pt}

It is known that there is a natural correspondence between the critical
(subcritical) branching processes in a random environment and the simple
random walks in a random environment with zero (negative) drift. In
particular, (\ref{T1.1}) admits an interpretation in terms of the following
simple random walk $\left\{ R_{k},k\geq 0\right\} $ in random environment.
The walk starts at point $R_{0}:=0$ and has transition probabilities
\begin{align}
& q_{n}:=\mathbf{P}\left( R_{k+1}=n-1|R_{k}=n\right) =\frac{e^{-X_{n+1}}}{%
1+e^{-X_{n+1}}},  \label{DeQ} \\
& p_{n}:=\mathbf{P}\left( R_{k+1}=n+1|R_{k}=n\right) =\frac{1}{1+e^{-X_{n+1}}%
},  \label{DeP}
\end{align}%
$n\in \mathbb{Z}$, where $\{X_{n},\,n\in \mathbb{Z}\}$ are i.i.d. random
variables. Let
\begin{equation*}
\chi :=\min \left\{ k>0:R_{k}=-1\right\} \text{ }
\end{equation*}%
and let%
\begin{equation*}
\ell (n):=\sum_{0\leq k\leq \chi }\mathit{1}\left\{ R_{k}=n\right\} ,\,n\geq
-1,
\end{equation*}%
be the local time of the random walk in random environment
calculated for the first nonnegative excursion. Clearly, if
\begin{equation*}
Z_{n}:=\sum_{i=0}^{n}\left( -1\right) ^{i}\ell (n-i-1),n\geq 0,
\end{equation*}%
then%
\begin{equation*}
\ell (n)=Z_{n+1}+Z_{n},n\geq 0.
\end{equation*}%
One can show that $\left\{ Z_{n},n\geq 0\right\} $ is a branching process in
random environment specified by the offspring generating functions
\begin{equation*}
f_{n}(s):=\frac{q_{n}}{1-p_{n}s}
\end{equation*}%
(see, \cite{VD06} for more detail). In particular, $T:=\min \left\{ j>0:\ell
(j)=0\right\} $ is the extinction moment of the branching process. Clearly,
if $\bar{R}:=\max_{0\leq k<\chi }R_{k}$ then
\begin{equation*}
\text{\ \ }\left\{ \bar{R}=n-1\right\} =\left\{ T=n\right\} .
\end{equation*}%
In these terms Theorem \ref{T1} and relation (\ref{T4})\ are equivalent to
the following statement.

\begin{theorem}
If $q_{n}$ and $p_{n},$specified by (\ref{DeQ})\ and (\ref{DeP}) are such
that
\begin{equation*}
X_{n}:=\log \left( p_{n}/q_{n}\right) ,\ n\in \mathbb{Z},
\end{equation*}%
satisfy Condition $A,$ then, as $n\rightarrow \infty $,
\begin{equation*}
\mathbf{P}\left( \bar{R}=n\right) \sim \theta \mathbf{P}(T^{-}=n).
\end{equation*}%
In addition,
\begin{equation*}
\mathbf{P}(\ell (n)>e^{xc_{n}}|\ell (n)>0,\ell (n+1)=0)\sim \mathbf{P}\left(
\tilde{\Lambda}_{1}>x\right) ,\ x>0,
\end{equation*}%
and, moreover,
\begin{equation*}
\mathcal{L}\left( \frac{\log \ell (\left[ nt\right]
)}{c_{n}},\,0\leq t\leq 1\Big| \,\ell (n)>0,\ell (n+1)=0 \right)
\Longrightarrow \mathcal{L}\left( \tilde{\Lambda}_{t},\,0\leq
t\leq 1\right) .
\end{equation*}
\end{theorem}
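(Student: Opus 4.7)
The plan is to translate each of the three assertions to the associated critical BPRE $\{Z_n, n\geq 0\}$ whose offspring generating functions are $f_n(s)=q_n/(1-p_n s)$, as noted in the paragraph preceding the theorem. Substituting (\ref{DeQ})--(\ref{DeP}) gives
\begin{equation*}
f_n(s)=\frac{e^{-X_{n+1}}}{1+e^{-X_{n+1}}-s},
\end{equation*}
which is exactly the geometric family (\ref{geom}), and $\log f_n'(1)=\log(p_n/q_n)=X_{n+1}$, so Condition $A$ for the RWRE is exactly Condition $A$ for the BPRE. I also exploit the identities $\ell(k)=Z_{k+1}+Z_k$ and $\{\bar R=n-1\}=\{T=n\}$. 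In particular, on the event $\{\bar R=n\}=\{T=n+1\}$ one has $Z_{n+1}=0$, whence $\ell(n)=Z_n$.

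The first assertion is then immediate: $\mathbf{P}(\bar R=n)=\mathbf{P}(T=n+1)$, and by Theorem \ref{T1} together with the regular variation of $l(n)/n^{2-\rho}$, this is asymptotic to $\theta(1-\rho)l(n)/n^{2-\rho}\sim\theta\,\mathbf{P}(T^-=n)$. For the second assertion, the identity $\ell(n)=Z_n$ on the conditioning event gives
\begin{equation*}
\mathbf{P}(\ell(n)>e^{xc_n}\mid \ell(n)>0,\,\ell(n+1)=0)=\mathbf{P}(Z_n>e^{xc_n}\mid T=n+1),
\end{equation*}
and by (\ref{T4}) applied with $n$ replaced by $n+1$, together with $c_{n+1}/c_n\to 1$ (so that $xc_n$ and $xc_{n+1}$ differ by a factor tending to one) and the continuity of the law of $\tilde\Lambda_1$, this tends to $\mathbf{P}(\tilde\Lambda_1>x)$.

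For the functional limit, I would write
\begin{equation*}
\log\ell([nt])=\log(Z_{[nt]+1}+Z_{[nt]})=\max\{\log Z_{[nt]+1},\log Z_{[nt]}\}+O(1),
\end{equation*}
where the $O(1)$ term lies in $[0,\log 2]$ and becomes negligible after dividing by $c_n\to\infty$. Theorem \ref{T5}, applied conditional on $T=n+1$ in place of $T=n$, yields the Skorokhod convergence of $t\mapsto \log Z_{[nt]}/c_n$ to $\tilde\Lambda_t$ on $D[0,1]$; the same convergence holds for $t\mapsto\log Z_{[nt]+1}/c_n$ because the two indices differ only by one and $[nt]/n\to t$. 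The main technical point is to verify joint Skorokhod convergence of these two shifted processes to the same limit $\tilde\Lambda$, so that the continuous functional $\max$ may be applied; this should follow from tightness considerations (the one-step increment $\log Z_{k+1}-\log Z_k$ is controlled by $X_{k+1}$, which is of smaller order than $c_n$) and should not require ideas beyond those already used to prove Theorem \ref{T5}.
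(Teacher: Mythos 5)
Your overall strategy — translate the three RWRE statements into the geometric BPRE setting via the dictionary $\ell(k)=Z_{k+1}+Z_k$, $\{\bar R=n-1\}=\{T=n\}$, and then invoke Theorem~\ref{T1}, relation~(\ref{T4}) and Theorem~\ref{T5} — is exactly what the paper intends; the paper itself states the equivalence without spelling out the details, so you are filling in a gap the authors left implicit. The first two assertions are handled correctly: the index shift $n\to n+1$ is harmless because $\mathbf{P}(T^-=n)$ is regularly varying and $c_{n+1}/c_n\to 1$, and the law of $\tilde\Lambda_1$ is absolutely continuous (it has a density proportional to $x^{-\alpha}$ times the density of $\Lambda_1$), so the scaling change from $c_{n+1}$ to $c_n$ is indeed absorbed.

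The weak point is the justification you offer for the functional limit. You propose to deduce
\begin{equation*}
\frac{1}{c_n}\max\bigl\{\log Z_{[nt]},\log Z_{[nt]+1}\bigr\}\Longrightarrow\tilde\Lambda_t
\end{equation*}
from \emph{joint} Skorokhod convergence of the pair to $(\tilde\Lambda,\tilde\Lambda)$ together with continuity of $\max$, and you argue that the needed tightness holds because the one-step increment $\log Z_{k+1}-\log Z_k$ is controlled by $X_{k+1}$, ``which is of smaller order than $c_n$.'' That last claim is false under Condition $A$: with $\alpha<2$ the increments $X_k$ are heavy-tailed and $\max_{k\le n}|X_k|$ is of exact order $c_n$; indeed these large single increments are precisely what produces the jumps of the limiting stable meander $\tilde\Lambda$. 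As a consequence, joint convergence of $(\log Z_{[nt]}/c_n,\ \log Z_{[nt]+1}/c_n)$ to $(\tilde\Lambda,\tilde\Lambda)$ in the product $J_1$ topology on $D([0,1];\mathbb R^2)$ genuinely \emph{fails}: the two coordinates perform essentially the same big jump but shifted by one lattice step, and a single time change cannot align both jump instants with the jump of $\tilde\Lambda$ (compare $W_n=\mathbf 1_{[1/2+1/n,\,1]}$ and $W_n(\cdot+1/n)=\mathbf 1_{[1/2,\,1]}$). So the ``apply a continuous functional'' step as you wrote it does not go through. The desired conclusion is nonetheless true and can be obtained by a slightly different, more hands-on argument: writing $W_n(t)=\log Z_{[nt]}/c_n$ and $V_n(t)=\max\{W_n(t),W_n(t+1/n)\}$, one shows directly that $V_n\to\tilde\Lambda$ in $J_1$ by modifying the Skorokhod time change for $W_n$ by an amount $O(1/n)$ in a neighbourhood of each large jump (upward jumps are anticipated by one mesh step, downward ones are not, and in either case a local $O(1/n)$ perturbation of the time change aligns $V_n$ with $\tilde\Lambda$). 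This replaces your appeal to tightness of the increments by an explicit handling of the (non-negligible) jumps.
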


Hence, the random walk in random environment visits the maximal possible
level for the first excursion many times provided the length $\chi $ of the
excursion is big. This is essentially different from the case $\mathbf{E}%
\log \left( p_{n}/q_{n}\right) =0,\mathbf{E}\log ^{2}\left(
p_{n}/q_{n}\right) <\infty ,$ where (compare with (\ref{eee}))$\ $%
\begin{equation*}
\lim_{N\rightarrow \infty }\limsup_{n\rightarrow \infty }\mathbf{P}(\ell
(n)>N|\ell (n)>0,\ell (n+1)=0)=0.
\end{equation*}

\section{Some auxiliary results for random walks}

To this aim let us agree to denote by $C,C_{1},C_{2},...,$ some constants
which may be different from formula to formula.

It is known (see, for instance, \cite[Ch. XVII, \S 5]{FE}) that if $X\in
\mathcal{D}(\alpha ,\beta )$ then the scaling sequence
\begin{equation}
c_{n}:=\min \left\{ x>0:\,\mathbf{P}\left( X>x\right) \leq n^{-1}\right\}
,\,n\geq 1,  \label{Defc}
\end{equation}%
for $S_{n}$ is regularly varying with index $\alpha ^{-1}$, i.e., there
exists a function $l_{1}(n),$ slowly varying at infinity, such that
\begin{equation}
c_{n}=n^{1/\alpha }l_{1}(n).  \label{asyma}
\end{equation}%
Moreover, if $X\in \mathcal{D}\left( \alpha ,\beta \right) $ with $\alpha
\in (0,2),$ then
\begin{equation}
\mathbf{P}\left( \left\vert X\right\vert >x\right) \sim \frac{1}{x^{\alpha
}l_{0}(x)}\quad \text{as }x\rightarrow \infty ,  \label{Tailtwo}
\end{equation}%
where $l_{0}(x)$ is a function slowly varying at infinity and
\begin{equation}
\frac{\mathbf{P}\left( X<-x\right) }{\mathbf{P}\left( \left\vert
X\right\vert >x\right) }\rightarrow q,\quad \frac{\mathbf{P}\left(
X>x\right) }{\mathbf{P}\left( \left\vert X\right\vert >x\right) }\rightarrow
p\quad \text{as }x\rightarrow \infty ,  \label{tailF}
\end{equation}%
with $p+q=1$ and $\beta =p-q$ in (\ref{std}). Besides,
\begin{equation}
\mathbf{P}\left( X<-c_{n}\right) \sim \frac{(2-\alpha )q}{\alpha n}\quad
\text{as \ }n\rightarrow \infty   \label{Asx}
\end{equation}%
by (\ref{Defc}) and (\ref{asyma}).

\subsection{Asymptotic behavior of overshoots and undershoots}

In this subsection we prove some results concerning the asymptotic behavior
of the distributions of overshoots and undershoots. We believe that these
results are of independent interest.

Let
\begin{equation*}
\tau ^{-}:=\min \{k\geq 1:S_{k}\leq 0\}.
\end{equation*}

Durrett \cite{Dur78} has shown that if $X\in \mathcal{D}(\alpha ,\beta )$
then
\begin{equation}
\lim_{n\rightarrow \infty }\mathbf{P}(S_{n}\leq xc_{n}|\tau ^{-}>n)=\mathbf{P%
}(\Lambda _{1}\leq x)\quad \text{for all }x\geq 0.  \label{meander}
\end{equation}%
By minor changes of the proof of (\ref{meander}) given by Durrett in \cite%
{Dur78}, one can demonstrate that
\begin{equation}
\lim_{n\rightarrow \infty }\mathbf{P}(S_{n}\leq xc_{n}|T^{-}>n)=\mathbf{P}%
(\Lambda _{1}\leq x)\quad \text{for all }x\geq 0.  \label{meander1}
\end{equation}

We now establish analogs of (\ref{meander}) and (\ref{meander1}) under the
conditions $\left\{ \tau ^{-}=n\right\} $ and $\left\{ T^{-}=n\right\} $.

\begin{lemma}
\label{LdderHeight} If Condition $A$ is valid, then, for any $u>0$,
\begin{equation*}
\lim_{n\rightarrow \infty }\mathbf{P}(S_{n}\leq -uc_{n}|\tau
^{-}=n)=\lim_{n\rightarrow \infty }\mathbf{P}(S_{n}\leq -uc_{n}|T^{-}=n)=%
\frac{\mathbf{E}\left[ (u+\Lambda _{1})^{-\alpha }\right] }{\mathbf{E}\left[
\Lambda _{1}^{-\alpha }\right] }.
\end{equation*}
\end{lemma}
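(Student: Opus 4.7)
The plan is to condition on $S_{n-1}$ and exploit the regularly varying negative tail of $X$ together with Durrett's meander invariance principle (\ref{meander}), (\ref{meander1}).

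\textbf{Step 1 (decomposition).} By the Markov property at time $n-1$,
\begin{equation*}
\mathbf{P}(\tau^{-}=n,\,S_{n}\leq -uc_{n})=\mathbf{E}\left[\bar{F}(uc_{n}+S_{n-1});\,\tau^{-}>n-1\right],
\end{equation*}
where $\bar{F}(y):=\mathbf{P}(X\leq -y)$; the choice $u=0$ recovers $\mathbf{P}(\tau^{-}=n)$. Writing $h_{n}^{u}(s):=\bar{F}((u+s)c_{n})/\bar{F}(c_{n})$, the ratio to be analyzed becomes
\begin{equation*}
\frac{\mathbf{P}(\tau^{-}=n,\,S_{n}\leq -uc_{n})}{\mathbf{P}(\tau^{-}=n)}=\frac{\mathbf{E}[h_{n}^{u}(S_{n-1}/c_{n})\,|\,\tau^{-}>n-1]}{\mathbf{E}[h_{n}^{0}(S_{n-1}/c_{n})\,|\,\tau^{-}>n-1]}.
\end{equation*}

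\textbf{Step 2 (pointwise limit of the integrand).} By (\ref{Tailtwo})--(\ref{tailF}), $\bar{F}$ is regularly varying of index $-\alpha$. Uniform convergence of slowly varying functions on compacts then yields $h_{n}^{u}(s)\to (u+s)^{-\alpha}$ uniformly on compact subsets of $[0,\infty)$ when $u>0$, and on compact subsets of $(0,\infty)$ when $u=0$.

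\textbf{Step 3 (numerator).} For $u>0$ the function $h_{n}^{u}$ is monotone decreasing in $s$ and bounded by $h_{n}^{u}(0)\to u^{-\alpha}$, so the integrand is uniformly bounded. Combined with Durrett's meander convergence (\ref{meander}) $S_{n-1}/c_{n-1}\Rightarrow \Lambda_{1}$ under $\tau^{-}>n-1$, together with $c_{n-1}/c_{n}\to 1$ from (\ref{asyma}), the bounded convergence theorem yields
\begin{equation*}
\mathbf{E}[h_{n}^{u}(S_{n-1}/c_{n})\,|\,\tau^{-}>n-1]\to \mathbf{E}[(u+\Lambda_{1})^{-\alpha}].
\end{equation*}

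\textbf{Step 4 (denominator -- the main obstacle).} For $u=0$ the integrand $h_{n}^{0}(s)$ is no longer bounded near $s=0$, and bounded convergence fails. The heart of the proof is the uniform-integrability estimate
\begin{equation*}
\lim_{\eta\downarrow 0}\limsup_{n\to\infty}\mathbf{E}\!\left[h_{n}^{0}(S_{n-1}/c_{n})\mathbf{1}\{S_{n-1}\leq \eta c_{n}\}\,\big|\,\tau^{-}>n-1\right]=0,
\end{equation*}
which requires precise control on how close $S_{n-1}$ can lie to the origin while the walk stays strictly positive. The natural tool is a local (density) estimate for the stable walk conditioned to stay positive, matching the known behavior $\propto x^{\alpha\rho-1}$ of the meander density near the origin (which also makes $\mathbf{E}[\Lambda_{1}^{-\alpha}]<\infty$). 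An alternative is a Markovian split $S_{n-1}=S_{\lfloor n/2\rfloor}+(S_{n-1}-S_{\lfloor n/2\rfloor})$: on $\{\tau^{-}>n-1\}$ the first half satisfies $S_{\lfloor n/2\rfloor}\geq \varepsilon c_{n}$ with high probability by (\ref{meander}), and the probability that the walk subsequently falls into $[0,\eta c_{n}]$ at time $n-1$ can be bounded by applying (\ref{meander1}) to the time-reversed walk, exploiting independence of the two halves. Once this is proved, bounded convergence on $\{S_{n-1}>\eta c_{n}\}$ plus the negligible contribution of $\{S_{n-1}\leq \eta c_{n}\}$ gives
\begin{equation*}
\mathbf{E}[h_{n}^{0}(S_{n-1}/c_{n})\,|\,\tau^{-}>n-1]\to \mathbf{E}[\Lambda_{1}^{-\alpha}].
\end{equation*}

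\textbf{Step 5 ($T^{-}$ version).} The argument is identical, replacing $\tau^{-}$ by $T^{-}$ throughout and using the meander limit (\ref{meander1}) in place of (\ref{meander}); since $\{T^{-}>n-1\}=\{S_{1},\dots,S_{n-1}\geq 0\}$, Step~1 decomposes in the same way, and the limiting distribution of $S_{n-1}/c_{n-1}$ is the same $\Lambda_{1}$. Taking the ratio of the two limits yields the common value $\mathbf{E}[(u+\Lambda_{1})^{-\alpha}]/\mathbf{E}[\Lambda_{1}^{-\alpha}]$ claimed in the lemma.
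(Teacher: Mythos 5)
Steps 1--3 of your proposal are fine and parallel the paper's computation of the numerator: the decomposition at time $n-1$ is correct, and for $u>0$ the regular variation of $\bar F$ together with Durrett's meander convergence (\ref{meander}), (\ref{meander1}) give the limit $\mathbf{E}[(u+\Lambda_1)^{-\alpha}]$ by bounded convergence, exactly as in the paper.

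The gap is Step 4. You correctly observe that in the $u=0$ case the integrand $(S_{n-1}/c_n)^{-\alpha}$ is unbounded near the origin, so the meander limit plus bounded convergence no longer applies, and some uniform-integrability estimate is required. But your proposal only names two possible strategies (a density bound near the origin, or a time-reversal split) without carrying out either. This is precisely the hard part: establishing
\begin{equation*}
\mathbf{E}\!\left[h_{n}^{0}(S_{n-1}/c_{n})\,\big|\,\tau^{-}>n-1\right]\to \mathbf{E}\left[\Lambda_{1}^{-\alpha}\right]
\end{equation*}
is equivalent, modulo (\ref{Asx}), to proving the local asymptotics $\mathbf{P}(\tau^{-}=n)\sim(1-\rho)\mathbf{P}(\tau^{-}>n-1)/n$, which is itself a nontrivial theorem. (Your parenthetical claim that the meander density near the origin behaves like $x^{\alpha\rho-1}$ is also incorrect: for the Brownian meander, $\alpha=2$, $\rho=1/2$, the density is $x e^{-x^2/2}\propto x^{1}$ while $\alpha\rho-1=0$; the correct exponent is $\alpha(1-\rho)$.)

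The paper sidesteps Step 4 entirely by not taking the ratio route. Instead of analyzing $\mathbf{P}(\tau^{-}=n)$ from scratch, it cites (\ref{AsimMom}), which is Theorem~7 of \cite{VW07} and gives $\mathbf{P}(\tau^{-}=n)\sim(1-\rho)\mathbf{P}(\tau^{-}>n-1)/n$, and then (\ref{AsE}), which is formula~(109) of \cite{VW07} and gives the explicit value $\mathbf{E}[\Lambda_{1}^{-\alpha}]=(1-\rho)\alpha/q(2-\alpha)$. Combined with (\ref{Asx}), these collapse the numerator estimate directly into the claimed ratio $\mathbf{E}[(u+\Lambda_1)^{-\alpha}]/\mathbf{E}[\Lambda_1^{-\alpha}]$ with no uniform-integrability argument at all. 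To close the gap in your proposal you should either invoke those two facts from \cite{VW07}, or else carry out the uniform-integrability estimate in detail --- which would in effect re-derive them.
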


\begin{proof}
For a fixed $u>0$ we have%
\begin{equation}
\mathbf{P}(S_{n}\leq -uc_{n};\tau ^{-}=n)=\mathbf{E}\left[ \mathbf{P}(X\leq
-S_{n-1}-uc_{n});\tau ^{-}>n-1\right] .  \label{3sum}
\end{equation}%
Since, under the conditions of our lemma,
\begin{equation}
\frac{\mathbf{P}(X\leq -x-uc_{n})}{\mathbf{P}(X\leq -c_{n})}%
=(x/c_{n}+u)^{-\alpha }(1+o(1))  \label{Asnu}
\end{equation}%
uniformly in $x\in \lbrack 0,\infty )$, we may approximate for large $n$ the
right-hand side of (\ref{3sum}) by the quantity
\begin{equation*}
\mathbf{P}(X\leq -c_{n})\mathbf{E}\left[ (S_{n-1}/c_{n}+u)^{-\alpha };\tau
^{-}>n-1\right] .
\end{equation*}%
Using (\ref{Asx}) and (\ref{meander}), we obtain
\begin{equation}
\mathbf{P}(S_{n}\leq -uc_{n};\tau ^{-}=n)\sim \frac{q(2-\alpha )}{\alpha n}%
\mathbf{P}(\tau ^{-}>n-1)\mathbf{E}\left[ (\Lambda _{1}+u)^{-\alpha }\right]
.  \label{LH_9}
\end{equation}%
Recall that, by Theorem~7 in \cite{VW07},
\begin{equation}
\mathbf{P}(\tau ^{-}=n)\sim (1-\rho )\frac{\mathbf{P}(\tau ^{-}>n-1)}{n}.
\label{AsimMom}
\end{equation}%
Therefore,
\begin{equation}
\mathbf{P}(S_{n}\leq -uc_{n}|\tau ^{-}=n)\sim \frac{q(2-\alpha )\mathbf{E}%
\left[ (u+\Lambda _{1})^{-\alpha }\right] }{(1-\rho )\alpha }.
\label{local2}
\end{equation}%
This finishes the proof of the first part of the lemma since
\begin{equation}
\mathbf{E}\left[ \Lambda _{1}^{-\alpha }\right] =(1-\rho )\alpha /q(2-\alpha
)  \label{AsE}
\end{equation}%
according to formula (109)\ in \cite{VW07}.

To demonstrate the second part it is sufficient to replace $\tau ^{-}$ by $%
T^{-}$ everywhere in the arguments above.
\end{proof}

\begin{lemma}
\label{LdderJump} If Condition $A$ is valid, then, for any $v>0$,
\begin{eqnarray*}
\lim_{n\rightarrow \infty }\mathbf{P}(S_{n-1}\geq vc_{n}|\tau ^{-}=n)
=\lim_{n\rightarrow \infty }\mathbf{P}(S_{n-1}\geq vc_{n}|T^{-}=n) =\mathbf{P%
}\left( \tilde{\Lambda}_{1}\geq v\right) .
\end{eqnarray*}
\end{lemma}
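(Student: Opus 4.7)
The plan is to mirror the proof of Lemma \ref{LdderHeight}, but now with the cut-off $\{S_{n-1}\geq vc_n\}$ in place of $\{S_n\leq -uc_n\}$. The key idea is that the single "killing" jump $X_n$ is what forces $S_n$ to be negative; conditioning on $\tau^-=n$ up to an asymptotically vanishing set is the same as conditioning on $\tau^->n-1$ and then requiring $X_n\leq -S_{n-1}$, which at the scale $c_n$ happens with a probability governed by the regularly varying left tail of $X$.

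Concretely, I would start from
\begin{equation*}
\mathbf{P}(S_{n-1}\geq vc_n;\tau^-=n)=\mathbf{E}\bigl[\mathbf{P}(X\leq -S_{n-1});\,\tau^->n-1,\,S_{n-1}\geq vc_n\bigr].
\end{equation*}
Because $v>0$ is fixed, on the event $\{S_{n-1}\geq vc_n\}$ one has $y:=S_{n-1}\to\infty$, so by (\ref{Tailtwo}), (\ref{tailF}), and (\ref{Asx}),
\begin{equation*}
\mathbf{P}(X\leq -y)=\mathbf{P}(X\leq -c_n)(y/c_n)^{-\alpha}(1+o(1))
\end{equation*}
uniformly in $y\geq vc_n$. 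Substituting gives the two-sided bound
\begin{equation*}
\mathbf{P}(S_{n-1}\geq vc_n;\tau^-=n)\sim \mathbf{P}(X\leq -c_n)\,\mathbf{E}\bigl[(S_{n-1}/c_n)^{-\alpha};\,\tau^->n-1,\,S_{n-1}/c_n\geq v\bigr].
\end{equation*}

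Next I would pass to the limit inside the expectation. The function $x\mapsto x^{-\alpha}\mathbf{1}\{x\geq v\}$ is bounded by $v^{-\alpha}$ and continuous at every $x$ except the single point $x=v$; since the meander law $\mathbf{P}(\Lambda_1\in\cdot)$ is atomless, Durrett's functional convergence (\ref{meander}) yields
\begin{equation*}
\mathbf{E}\bigl[(S_{n-1}/c_n)^{-\alpha}\mathbf{1}\{S_{n-1}/c_n\geq v\}\,\bigm|\,\tau^->n-1\bigr]\longrightarrow \mathbf{E}\bigl[\Lambda_1^{-\alpha};\Lambda_1\geq v\bigr].
\end{equation*}
(The shift from $c_{n-1}$ to $c_n$ is harmless because $c_n/c_{n-1}\to 1$ by (\ref{asyma}).) Combining with (\ref{Asx}) and (\ref{AsimMom}), and then using (\ref{AsE}) to identify $(2-\alpha)q/((1-\rho)\alpha)=1/\mathbf{E}[\Lambda_1^{-\alpha}]$, I obtain
\begin{equation*}
\mathbf{P}(S_{n-1}\geq vc_n\mid \tau^-=n)\longrightarrow \frac{\mathbf{E}[\Lambda_1^{-\alpha};\,\Lambda_1\geq v]}{\mathbf{E}[\Lambda_1^{-\alpha}]}=\mathbf{P}(\tilde\Lambda_1\geq v),
\end{equation*}
by the very definition of $\tilde\Lambda$. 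The statement for $T^-$ follows by replacing $\tau^-$ by $T^-$ throughout and invoking (\ref{meander1}) instead of (\ref{meander}).

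The only delicate step is the uniformity in $y\geq vc_n$ of the tail asymptotic for $\mathbf{P}(X\leq -y)$, but here the strict positivity of $v$ rescues us: $y/c_n$ is bounded below by $v$, so Potter-type bounds give uniform control, and the integrand $(y/c_n)^{-\alpha}\mathbf{1}\{y\geq vc_n\}$ is bounded (no tail contribution to worry about as $y\to\infty$, since $x\mapsto x^{-\alpha}$ decreases). This is where the proof is easier than in Lemma~\ref{LdderHeight}, where the cut-off was at $-uc_n$ on the opposite side and the lower part of $(S_{n-1}/c_n+u)^{-\alpha}$ required careful handling near $S_{n-1}=0$; here there is no such issue, and the argument is essentially a direct transcription of the proof of Lemma~\ref{LdderHeight}.
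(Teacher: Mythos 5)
Your proof is correct and is essentially the same argument the paper gives: the paper's proof of Lemma~\ref{LdderJump} consists of the same identity, an appeal to (\ref{Asnu}) with $u=0$ restricted to $x\geq vc_n$, and the instruction to ``use the arguments similar to those applied to demonstrate Lemma \ref{LdderHeight}''. You have simply written out in full the details the paper delegates to the reader (uniform regular-variation control for $y\geq vc_n$, passage to the limit via (\ref{meander})/(\ref{meander1}), normalization via (\ref{Asx}), (\ref{AsimMom}), (\ref{AsE})), and your remark about the $c_n$ versus $c_{n-1}$ shift and the boundedness of $x\mapsto x^{-\alpha}\mathbf{1}\{x\geq v\}$ is exactly the justification the paper leaves implicit.
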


\begin{proof}
To establish the desired statement one should use the equality%
\begin{eqnarray*}
\mathbf{P}(S_{n-1} &\geq &vc_{n}|\tau ^{-}=n) \\
&=&\frac{\mathbf{P}(\tau ^{-}>n-1)}{\mathbf{P}(\tau ^{-}=n)}\mathbf{E}\left[
\mathbf{P}(X\leq -S_{n-1});S_{n-1}\geq vc_{n}|\tau ^{-}>n-1)\right] ,
\end{eqnarray*}%
and a similar representation with $\tau ^{-}$ replaced by $T^{-},$ the
asymptotic equality (\ref{Asnu}) with $u=0$ and $x\geq vc_{n}$, and the
arguments similar to those applied to demonstrate Lemma \ref{LdderHeight}.
\end{proof}

\begin{remark}
\label{Rem1}It follows from Lemmas \ref{LdderHeight} and \ref{LdderJump}
that the passage from positive to negative (nonnegative) values just at
moment $n$ is possible only owing to a big negative jump of order $c_{n}$ at
this moment. More precisely, in this case both $S_{n-1}$ and $-S_{n}$ are of
order $c_{n}$.
\end{remark}


\subsection{Expectations on the event $\{T^-=n\}$}

Let
\begin{equation*}
T_{0}:=0,\quad T_{j+1}:=\min (n>T_{j}:S_{n}<S_{T_{j}}),\,j\geq 0,
\end{equation*}%
\ be strictly descending ladder epochs of the random walk $S$. Clearly, $%
T^{-}=T_{1}$. Put $L_{n}:=\min_{0\leq k\leq n}S_{k}$ and introduce the
function
\begin{equation*}
V(x):=\sum_{j=0}^{\infty }\mathbf{P}(S_{T_{j}}\geq -x),\quad x>0,\quad
V\left( 0\right) =1,\quad V\left( x\right) =0,\quad x<0.
\end{equation*}%
The fundamental property of the function $V(x)$ is the identity
\begin{equation}
\mathbf{E}\left[ V(x+X);X+x\geq 0\right] =V(x),\,x\geq 0.  \label{DefV}
\end{equation}

Denote by ${\mathcal{F}}$ the filtration consisting of the $\sigma -$fields $%
{\mathcal{F}}_{n}$ generated by the random variables $S_{0},...,S_{n}$. By
means of $V(x)$ we may specify a probability measure $\mathbf{P}^{+}$ as
follows%
\begin{equation*}
\mathbf{E}^{+}\left[ \psi \left( S_{0},...,S_{n}\right) \right] :=\mathbf{E}%
\left[ \psi \left( S_{0},...,S_{n}\right) V(S_{n});L_{n}\geq 0\right] ,
\end{equation*}%
where $\psi $ is an arbitrary measurable function on the respective space of
arguments. One can check that, in view of (\ref{DefV}), this measure is well
defined (see \cite{AGKV05} for more details).

We now formulate a statement related to the measure $\mathbf{P}^{+}$ which
is a particular case of Lemma 2.5 in \cite{AGKV05}.

\begin{lemma}
\label{Lappbasic3}\cite{AGKV05} Let condition (\ref{Spit}) hold and let $\xi
_{k}$ be a bounded $\mathcal{F}_{k}$-measurable random variable. Then
\begin{equation*}
\lim_{n\rightarrow \infty }\mathbf{E}[\xi _{k}\,|T^{-}>n]=\mathbf{E}^{+}\xi
_{k}.
\end{equation*}%
More general, let $\xi _{1},\xi _{2},\ldots $ be a sequence of uniformly
bounded random variables adopted to the filtration ${\mathcal{F}}$ such that
\begin{equation}
\lim_{n\rightarrow \infty }\xi _{n}=:\xi _{\infty }\qquad   \label{conplus}
\end{equation}%
exists $\mathbf{P}^{+}$ - a.s. Then
\begin{equation*}
\lim_{n\rightarrow \infty }\mathbf{E}[\xi _{n}\,|\,T^{-}>n]=\mathbf{E}%
^{+}\xi _{\infty }.
\end{equation*}
\end{lemma}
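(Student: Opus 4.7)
The plan is to prove the first (fixed-$k$) assertion directly by the Markov property combined with the classical fluctuation-theoretic asymptotic for $\mathbf{P}_y(T^->m)$, and then to deduce the second assertion by a monotone sandwich driven by the $\mathbf{P}^+$-a.s.\ convergence $\xi_n\to\xi_\infty$.

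For the first assertion, fix $k$ and a bounded $\mathcal{F}_k$-measurable $\xi_k$. Conditioning on $\mathcal{F}_k$ and using the Markov property gives, for $n\ge k$,
\begin{equation*}
\mathbf{E}[\xi_k;T^->n]=\mathbf{E}\bigl[\xi_k\mathbf{1}_{\{T^->k\}}\mathbf{P}_{S_k}(T^->n-k)\bigr],
\end{equation*}
where $\mathbf{P}_y$ denotes the law of the walk started at $y\ge 0$. I would divide by $\mathbf{P}(T^->n)$ and pass to the limit using the well-known relation
\begin{equation*}
\lim_{m\to\infty}\frac{\mathbf{P}_y(T^->m)}{\mathbf{P}(T^->m)}=V(y),\qquad y\ge 0,
\end{equation*}
together with a uniform upper bound of the form $\mathbf{P}_y(T^->m)\le C\,V(y)\,\mathbf{P}(T^->m)$. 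Dominated convergence, whose integrability hypothesis is guaranteed by the identity $\mathbf{E}[V(S_k);L_k\ge 0]=1$ that follows from iterating (\ref{DefV}), then yields
\begin{equation*}
\lim_{n\to\infty}\frac{\mathbf{E}[\xi_k;T^->n]}{\mathbf{P}(T^->n)}=\mathbf{E}\bigl[\xi_k V(S_k);L_k\ge 0\bigr]=\mathbf{E}^+\xi_k,
\end{equation*}
which is the first claim.

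To upgrade this to a general adapted sequence $(\xi_n)$, I would use a two-parameter monotone sandwich. Fix $k\le N$ and set
\begin{equation*}
\underline{\xi}_{k,N}:=\inf_{k\le m\le N}\xi_m,\qquad \overline{\xi}_{k,N}:=\sup_{k\le m\le N}\xi_m,
\end{equation*}
both of which are bounded and $\mathcal{F}_N$-measurable. For every $n\ge N$,
\begin{equation*}
\mathbf{E}\bigl[\underline{\xi}_{k,N}\mid T^->n\bigr]\;\le\;\mathbf{E}[\xi_n\mid T^->n]\;\le\;\mathbf{E}\bigl[\overline{\xi}_{k,N}\mid T^->n\bigr],
\end{equation*}
and the first part of the lemma, applied with index $N$, sends the outer conditional expectations to $\mathbf{E}^+\underline{\xi}_{k,N}$ and $\mathbf{E}^+\overline{\xi}_{k,N}$ respectively as $n\to\infty$. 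Monotone convergence under $\mathbf{P}^+$, combined with the hypothesis $\xi_m\to\xi_\infty$ $\mathbf{P}^+$-a.s.\ and uniform boundedness, then delivers
\begin{equation*}
\mathbf{E}^+\underline{\xi}_{k,N}\;\xrightarrow[N\to\infty]{}\;\mathbf{E}^+\inf_{m\ge k}\xi_m\;\xrightarrow[k\to\infty]{}\;\mathbf{E}^+\xi_\infty,
\end{equation*}
with the symmetric statement for $\overline{\xi}_{k,N}$. Squeezing $\liminf_n$ and $\limsup_n$ between these bounds produces the desired equality.

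The only genuinely delicate ingredient is the uniform domination $\mathbf{P}_y(T^->m)\le C\,V(y)\,\mathbf{P}(T^->m)$ required for the dominated-convergence step in Part~1; this is a standard Pruitt-type estimate for oscillating walks satisfying (\ref{Spit}), but without it the interchange of $n\to\infty$ with the expectation is nontrivial, since $V(S_k)$ is generally unbounded on $\{T^->k\}$. Once this bound is in hand, no further random-walk input is needed: the extension to adapted sequences is a purely soft monotone-class argument leveraging the a.s.\ convergence of $(\xi_n)$ under the harmonic-transform measure $\mathbf{P}^+$.
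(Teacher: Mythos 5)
The paper itself gives no proof of this lemma: it is quoted verbatim as a particular case of Lemma 2.5 in~\cite{AGKV05}, and the authors move directly on to the ``local'' analogue, Lemma~\ref{LappbasicLocal}. So the comparison here is between your proposal and the argument you would find in \cite{AGKV05}, not in this paper.

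Your first part is the standard argument and is in order once the domination bound $\mathbf{P}_y(T^->m)\le C\,V(y)\,\mathbf{P}(T^->m)$ is granted; you correctly single that bound out as the delicate ingredient, and together with $\mathbf{E}[V(S_k);L_k\ge0]=1$ from iterating (\ref{DefV}) it makes dominated convergence legitimate. The second part, however, has a genuine gap. You set $\underline{\xi}_{k,N}:=\inf_{k\le m\le N}\xi_m$ and $\overline{\xi}_{k,N}:=\sup_{k\le m\le N}\xi_m$ and then claim, for every $n\ge N$,
\begin{equation*}
\mathbf{E}\bigl[\underline{\xi}_{k,N}\mid T^->n\bigr]\le\mathbf{E}[\xi_n\mid T^->n]\le\mathbf{E}\bigl[\overline{\xi}_{k,N}\mid T^->n\bigr].
\end{equation*}
This sandwich is false: for $n>N$ the random variable $\xi_n$ need not lie between $\underline{\xi}_{k,N}$ and $\overline{\xi}_{k,N}$, since the infimum and supremum are only taken over the window $[k,N]$, which does not contain $n$. (Take $\xi_m$ any bounded strictly increasing sequence; then $\xi_n>\overline{\xi}_{k,N}$ for all $n>N$.) The sandwich only holds for $k\le n\le N$, but then you cannot send $n\to\infty$ while keeping $N$ fixed, which is exactly what is required to invoke Part~1 with the $\mathcal{F}_N$-measurable envelope. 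Replacing $\underline{\xi}_{k,N},\overline{\xi}_{k,N}$ by $\inf_{m\ge k}\xi_m$ and $\sup_{m\ge k}\xi_m$ restores the pointwise sandwich but destroys $\mathcal{F}_N$-measurability for any finite $N$, so Part~1 no longer applies to the envelopes. The core difficulty of the adapted-sequence statement is precisely that $\xi_n$ is $\mathcal{F}_n$-measurable with $n$ equal to the conditioning horizon, so the quantity of interest lives ``at the edge'' of the conditioning window; a purely soft monotone-class argument of the type you propose does not reach it, and the actual proof in \cite{AGKV05} has to control $\mathbf{E}[\,|\xi_n-\xi_k|\mid T^->n]$ by a genuinely random-walk argument rather than by an $\mathcal{F}_N$-measurable domination.
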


We prove a "local" version of this lemma under the additional assumption $%
X\in D(\alpha ,\beta )$. To this aim let
$\tilde{S}:=\{\tilde{S}_{n},n\geq 0\}$ be a probabilistic copy of
$\{S_{n},~n\geq~0\}$. Later on all variables
and expectations related with $\tilde{S}$ \ are supplied with the symbol~$%
^{\sim }$. For instance, we set $\tilde{L}_{n}:=\min_{0\leq k\leq n}%
\tilde{S}_{k}.$

\begin{lemma}
\label{LappbasicLocal}Let $X\in D(\alpha ,\beta )$ with $\alpha <2$ and $%
\beta <1$, and let $\xi _{k}$ be a bounded $\mathcal{F}_{k}$-measurable
random variable. Then
\begin{equation*}
\lim_{n\rightarrow \infty }\mathbf{E}[\xi _{k}\,|\,T^{-}=n]=\mathbf{E}%
^{+}\xi _{k}.
\end{equation*}%
More general, let $\xi _{1},\xi _{2},\ldots $ be a sequence of uniformly
bounded random variables adopted to the filtration ${\mathcal{F}}$ such that
the limit
\begin{equation}
\lim_{n\rightarrow \infty }\xi _{n}=:\xi _{\infty }\qquad   \label{AS}
\end{equation}%
exists $\mathbf{P}^{+}$ - a.s. Then
\begin{equation}
\lim_{n\rightarrow \infty }\mathbf{E}[\xi _{n-1}\,|\,T^{-}=n]=\mathbf{E}%
^{+}\xi _{\infty }.  \label{ASSS}
\end{equation}%
Moreover,%
\begin{equation}
\lim_{n\rightarrow \infty }\mathcal{L}\left( \xi _{n-1}\,|\,T^{-}=n\right) =%
\mathcal{L}\left( \xi _{\infty }\right) .  \label{FFin}
\end{equation}
\end{lemma}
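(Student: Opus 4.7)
The plan is to mirror the proof of Lemma~\ref{Lappbasic3}, replacing the tail asymptotic for $T^{-}$ by the local version in~\eqref{AsimMom}. For a bounded $\mathcal{F}_{k}$-measurable $\xi_{k}$, I would use the Markov property at time~$k$ to write
\[
\mathbf{E}[\xi_{k};\,T^{-}=n]=\mathbf{E}\bigl[\xi_{k}\,h_{n-k}(S_{k});\,T^{-}>k\bigr],\qquad h_{j}(y):=\mathbf{P}(\hat{T}^{-}_{y}=j),
\]
where $\hat{T}^{-}_{y}:=\min\{j\geq 1:\,y+S'_{j}<0\}$ is the first strict descent below zero of an independent copy $\{S'_{j}\}$ of the walk started from~$y$. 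The classical asymptotic $\mathbf{P}(\hat{T}^{-}_{y}>j)\sim V(y)\,\mathbf{P}(T^{-}>j)$ (uniform in $y$ on compacts under Condition~$A$) combined with the local estimate~\eqref{AsimMom}, applied to both the shifted and the original walk, gives
\[
h_{j}(y)\sim V(y)\,\mathbf{P}(T^{-}=j),\qquad j\to\infty,
\]
still uniformly in~$y$ on compacts. Dividing by $\mathbf{P}(T^{-}=n)$ and using dominated convergence yields $\mathbf{E}[\xi_{k}\mid T^{-}=n]\to\mathbf{E}[\xi_{k}V(S_{k});\,T^{-}>k]=\mathbf{E}^{+}\xi_{k}$, which proves the first assertion.

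For~\eqref{ASSS} I would introduce a cutoff level~$m$ and decompose
\[
\mathbf{E}[\xi_{n-1}\mid T^{-}=n]=\mathbf{E}[\xi_{m}\mid T^{-}=n]+\mathbf{E}[\xi_{n-1}-\xi_{m}\mid T^{-}=n].
\]
By the first part the leading summand tends to $\mathbf{E}^{+}\xi_{m}$, which approaches $\mathbf{E}^{+}\xi_{\infty}$ as $m\to\infty$ by~\eqref{AS} and bounded convergence. To handle the error I would reapply the Markov property, now at time~$m$: on $\{T^{-}>m\}$ it expresses the error as an expectation of an inner conditional expectation of $|\xi_{n-1}-\xi_{m}|$ computed for the walk shifted to start at~$S_{m}$ and further conditioned on $\hat{T}^{-}_{S_{m}}=n-m$. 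Repeating the first-part analysis inside this inner expectation, using~\eqref{AS} to identify its $\mathbf{P}^{+}$-limit, and integrating against the $\mathbf{P}^{+}$-law of~$S_{m}$, I obtain
\[
\mathbf{E}[\,|\xi_{n-1}-\xi_{m}|\,\mid T^{-}=n]\longrightarrow \mathbf{E}^{+}|\xi_{\infty}-\xi_{m}|,
\]
which vanishes as $m\to\infty$ by~\eqref{AS} and bounded convergence. The weak convergence~\eqref{FFin} then follows by applying~\eqref{ASSS} to $\phi(\xi_{n-1})$ for each bounded continuous~$\phi$, since $\phi(\xi_{n})\to\phi(\xi_{\infty})$ $\mathbf{P}^{+}$-a.s.

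The main obstacle will be the uniformity in~$y$ of the local asymptotic $h_{j}(y)\sim V(y)\mathbf{P}(T^{-}=j)$ on the natural range of~$S_{m}$, which is what permits the interchange of the $n$- and $m$-limits in the cutoff step. This uniformity is not immediate from~\eqref{AsimMom} alone and must be extracted by combining the latter with precise renewal-type estimates for the descending ladder epochs under Condition~$A$. Verifying that the resulting error remains negligible after integration against the conditional law of~$S_{m}$ on $\{T^{-}>m\}$ is the technical core of the argument.
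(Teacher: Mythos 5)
Your proof takes a genuinely different route from the paper's. You condition at a fixed time $k$ (or $m$), which forces you to invoke a local limit theorem for the first passage time from a general starting point, $\mathbf{P}(\hat{T}^{-}_y=j)\sim V(y)\mathbf{P}(T^{-}=j)$, together with uniformity of this asymptotic over the (unbounded) range of $S_k$ on $\{T^{-}>k\}$. The paper does not need this. Instead it peels off the \emph{last} jump: since $\{T^{-}=n\}=\{T^{-}>n-1\}\cap\{X_n<-S_{n-1}\}$, it replaces the local conditioning by the non-local one at the cost of the weight $\mathbf{P}(X<-S_{n-1})$, which by heavy-tailedness equals $(S_{n-1}/c_n)^{-\alpha}\mathbf{P}(X<-c_n)(1+o(1))$ once Lemma~\ref{LdderJump} guarantees that $S_{n-1}/c_n$ concentrates on $[\varepsilon,\varepsilon^{-1}]$. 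After this reduction the only local ingredient required is $\mathbf{P}(T^{-}=n)\sim(1-\rho)\mathbf{P}(T^{-}>n-1)/n$, i.e.\ (\ref{**}) for the walk started at the origin, and the rest follows from the already-available non-local Lemma~\ref{Lappbasic3} and the meander convergence. This is both shorter and entirely in the spirit of the ``big-jump'' heuristic that drives the whole paper.

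Two concrete problems with your version. First, the gap you flag yourself --- uniformity of $h_j(y)\sim V(y)\mathbf{P}(T^{-}=j)$ in $y$ --- is a real one; it is not a corollary of (\ref{AsimMom}) and amounts to reproving (\ref{**}) with an arbitrary starting point, which is precisely the technical work the paper's decomposition avoids. Second, and more seriously, your treatment of (\ref{ASSS}) is circular as written: after conditioning at time $m$, the ``inner conditional expectation'' of $|\xi_{n-1}-\xi_m|$ involves a variable whose index grows with $n$, so it is not a fixed $\mathcal{F}_k$-measurable quantity to which the first part applies. Identifying its limit under conditioning on $\{\hat T^{-}_{S_m}=n-m\}$ is exactly the statement you are trying to prove (for the shifted walk). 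The paper sidesteps this cleanly: after the big-jump reduction, it uses the elementary bound $\psi_\varepsilon\le\varepsilon^{-\alpha}$ to dominate the error term by $C\varepsilon^{-\alpha}\,\mathbf{E}[\,|\xi_k-\xi_{n-1}|\mid T^{-}>n-1\,]$, which is then killed by the non-local Lemma~\ref{Lappbasic3}, with no recursion. You would need to replace your cutoff-inside-a-cutoff step by some analogous reduction to a non-local expectation to make the argument close.
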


\begin{proof}
According to Lemma \ref{LdderJump}, for any fixed $\varepsilon \in (0,1)$,
\begin{eqnarray}
&&\limsup_{n\rightarrow \infty }\left\vert \mathbf{E}\left[ \xi _{k}\mathrm{1%
}\left\{ \frac{S_{n-1}}{c_{n}}\notin \left[ \varepsilon ,\varepsilon ^{-1}%
\right] \right\} \Big|T^{-}=n\right] \right\vert   \notag \\
&&\qquad \qquad \leq C\limsup_{n\rightarrow \infty }\mathbf{P}\left( \frac{%
S_{n-1}}{c_{n}}\notin \left[ \varepsilon ,\varepsilon ^{-1}\right] \Big|%
T^{-}=n\right)   \notag \\
&&\qquad \qquad \qquad \qquad \qquad \qquad \qquad \leq C\mathbf{P}\left(
\Lambda _{1}\notin \left[ \varepsilon ,\varepsilon ^{-1}\right] \right)
\label{Est10}
\end{eqnarray}%
and this, by Remark \ref{Rem1}, tends to zero as $\varepsilon \downarrow 0$.
Further, \
\begin{eqnarray*}
&&\mathbf{E}\left[ \xi _{k}\mathrm{1}\left\{ \frac{S_{n-1}}{c_{n}}\in \left(
\varepsilon ,\varepsilon ^{-1}\right) \right\} ;T^{-}=n\right]  \\
&&\qquad \qquad =\mathbf{E}\left[ \xi _{k}\mathbf{P}\left( X<-S_{n-1}\right)
\mathrm{1}\left\{ \frac{S_{n-1}}{c_{n}}\in \left[ \varepsilon ,\varepsilon
^{-1}\right] \right\} ;T^{-}>n-1\right] .
\end{eqnarray*}%
Set $\psi _{\varepsilon }(x):=x^{-\alpha }\mathrm{1}(\varepsilon \leq x\leq
\varepsilon ^{-1})$. Since
\begin{equation*}
\mathbf{P}(X<-uc_{n})\sim u^{-\alpha }\mathbf{P}(X<-c_{n})\sim u^{-\alpha }%
\frac{q(2-\alpha )}{\alpha n}
\end{equation*}%
uniformly in $u\in \lbrack \varepsilon ,\varepsilon ^{-1}]$, we have
\begin{align}
& \mathbf{E}\left[ \xi _{k}\mathbf{P}(X<-S_{n-1})\mathrm{1}\left\{ \frac{%
S_{n-1}}{c_{n}}\in \left[ \varepsilon ,\varepsilon ^{-1}\right] \right\}
;T^{-}>n-1\right]   \notag \\
& \sim \mathbf{P}(X<-c_{n})\mathbf{E}\left[ \xi _{k}\left( \frac{S_{n-1}}{%
c_{n}}\right) ^{-\alpha }\mathrm{1}\left\{ \frac{S_{n-1}}{c_{n}}\in \left[
\varepsilon ,\varepsilon ^{-1}\right] \right\} ;T^{-}>n-1\right]   \notag \\
& \qquad \qquad =\frac{q(2-\alpha )}{\alpha n}\mathbf{E}\left[ \xi _{k}\psi
_{\varepsilon }\left( \frac{S_{n-1}}{c_{n}}\right) ;T^{-}>n-1\right] .
\label{Est11}
\end{align}%
\ Conditioning on $S_{0},S_{1},\ldots ,S_{k-1}$ gives
\begin{align*}
& \mathbf{E}\left[ \xi _{k}\psi _{\varepsilon }\left( \frac{S_{n-1}}{c_{n}}%
\right) ;T^{-}>n-1\right] \hspace{4cm} \\
& =\mathbf{E}\left[ \xi _{k}\tilde{\mathbf{E}}\left[ \psi _{\varepsilon
}\left( \frac{\tilde{S}_{n-k}}{c_{n}}\right) ;\tilde{L}_{n-k}\geq -S_{k-1}%
\right] ;T^{-}>k-1\right] .
\end{align*}%
Using Lemmas~2.1 and 2.3 from \cite{AGKV05}, one can easily verify that
\begin{align*}
& \mathbf{E}\left[ \xi _{k}\tilde{\mathbf{E}}\left[ \psi _{\varepsilon
}\left( \frac{\tilde{S}_{n-k}}{c_{n}}\right) ;\tilde{L}_{n-k}\geq -S_{k-1}%
\right] ;T^{-}>k-1\right]  \\
& \hspace{1cm}\sim \mathbf{E}\left[ \xi _{k}\mathbf{P}(\tilde{L}_{n-k}\geq
-S_{k-1});T^{-}>k-1\right] \mathbf{E}[\psi _{\varepsilon }(\Lambda _{1})] \\
& \hspace{2cm}\sim \mathbf{E}\left[ \xi _{k}V(S_{k-1});T^{-}>k-1\right]
\mathbf{P}(T^{-}>n-k)\mathbf{E}[\psi _{\varepsilon }(\Lambda _{1})] \\
& \qquad \qquad \qquad \qquad \sim \mathbf{E}^{+}\left[ \xi _{k}\right]
\mathbf{P}(T^{-}>n-1)\mathbf{E}[\psi _{\varepsilon }(\Lambda _{1})].
\end{align*}%
Thus,
\begin{eqnarray}
&&\mathbf{E}\left[ \xi _{k}\psi _{\varepsilon }\left( \frac{S_{n-1}}{c_{n}}%
\right) \mathrm{1}\left\{ \frac{S_{n-1}}{c_{n}}\in \left[ \varepsilon
,\varepsilon ^{-1}\right] \right\} \Big|T^{-}=n\right]   \notag \\
&&\qquad \qquad \sim \frac{\mathbf{P}(T^{-}>n-1)}{\mathbf{P}(T^{-}=n)}\frac{%
q(2-\alpha )}{\alpha n}\mathbf{E}^{+}\left[ \xi _{k}\right] \mathbf{E}[\psi
_{\varepsilon }(\Lambda _{1})].  \label{Est133}
\end{eqnarray}%
Clearly, $\mathbf{E}[\psi _{\varepsilon }(\Lambda _{1})]\rightarrow \mathbf{E%
}\Lambda _{1}^{-\alpha }$ as $\varepsilon \rightarrow 0.$ Combining these
estimates with (\ref{AsE}), (\ref{tails}) and the asymptotic relation%
\begin{equation}
\mathbf{P}(T^{-}=n)\sim \frac{(1-\rho )}{n}\mathbf{P}(T^{-}>n-1)=(1-\rho )%
\frac{l(n)}{n^{2-\rho }},  \label{**}
\end{equation}%
established in Theorem 8 of \cite{VW07}, and recalling (\ref{Est10}), we
complete the proof of the first part of the lemma.

To show the second part we fix an $\varepsilon \in (0,1)$ and write%
\begin{eqnarray*}
\left\vert \mathbf{E}\left[ \xi _{k}-\xi _{n-1}|T^{-}=n\right] \right\vert
&\leq &\mathbf{E}\left[ \left\vert \xi _{k}-\xi _{n-1}\right\vert \mathrm{1}%
\left\{ \frac{S_{n-1}}{c_{n}}\notin \left[ \varepsilon ,\varepsilon ^{-1}%
\right] \right\} \Big|T^{-}=n\right]  \\
&&+\mathbf{E}\left[ \left\vert \xi _{k}-\xi _{n-1}\right\vert \mathrm{1}%
\left\{ \frac{S_{n-1}}{c_{n}}\in \left[ \varepsilon ,\varepsilon ^{-1}\right]
\right\} \Big|T^{-}=n\right] .
\end{eqnarray*}%
Similarly to (\ref{Est10}),
\begin{align}
& \lim_{\varepsilon \downarrow 0}\limsup_{n\rightarrow \infty }\mathbf{E}%
\left[ \left\vert \xi _{k}-\xi _{n-1}\right\vert \mathrm{1}\left\{ \frac{%
S_{n-1}}{c_{n}}\notin \left[ \varepsilon ,\varepsilon ^{-1}\right] \right\} %
\Big|T^{-}=n\right]   \notag \\
& \qquad \qquad \leq C\lim_{\varepsilon \downarrow 0}\limsup_{n\rightarrow
\infty }\mathbf{P}\left( \frac{S_{n-1}}{c_{n}}\notin \left[ \varepsilon
,\varepsilon ^{-1}\right] \Big|T^{-}=n\right) =0,  \label{time1}
\end{align}%
while, by analogy with (\ref{Est11}) and (\ref{Est133}),
\begin{align}
& \mathbf{E}\left[ \left\vert \xi _{k}-\xi _{n-1}\right\vert \mathrm{1}%
\left\{ \frac{S_{n-1}}{c_{n}}\in \left[ \varepsilon ,\varepsilon ^{-1}\right]
\right\} \Big|T^{-}=n\right]   \notag \\
& \qquad \qquad \leq C\mathbf{E}\left[ \left\vert \xi _{k}-\xi
_{n-1}\right\vert \psi _{\varepsilon }\left( \frac{S_{n-1}}{c_{n}}\right) %
\Big|T^{-}>n-1\right]   \notag \\
& \qquad \qquad \qquad \leq C\varepsilon ^{-\alpha }\mathbf{E}\left[
\left\vert \xi _{k}-\xi _{n-1}\right\vert \Big|T^{-}>n-1\right] .
\label{time2}
\end{align}%
We know by Lemma \ref{Lappbasic3} that, given (\ref{AS}),
\begin{equation}
\lim_{k\rightarrow \infty }\lim_{n\rightarrow \infty }\mathbf{E}\left[
\left\vert \xi _{k}-\xi _{n-1}\right\vert |T^{-}>n-1\right]
=\lim_{k\rightarrow \infty }\mathbf{E}^{+}\left\vert \xi _{k}-\xi _{\infty
}\right\vert =0.  \label{time3}
\end{equation}%
Combining (\ref{time1})-(\ref{time3}) completes the proof of the second part
of the lemma.

To prove (\ref{FFin}) it is sufficient to observe that, by (\ref{ASSS}) and
the dominated convergence theorem,
\begin{equation*}
\lim_{n\rightarrow \infty }\mathbf{E}[e^{it\xi _{n-1}}\,|\,T^{-}=n]=\mathbf{E%
}^{+}\left[ e^{it\xi _{\infty }}\right],\,t\in (-\infty ,\infty ).
\end{equation*}
\end{proof}

Set $\mu _{n}:=\min \{k\geq 0:S_{k}=L_{n}\}$.

\begin{lemma}
\label{LMinMax}If $X\in \mathcal{D}(\alpha ,\beta )$ then%
\begin{equation*}
\limsup_{n\rightarrow \infty }nc_{n}\mathbf{E}\left[ e^{2L_{n}-S_{n}}\right]
<\infty
\end{equation*}%
and%
\begin{equation}
\limsup_{n\rightarrow \infty }nc_{n}\mathbf{E}\left[ e^{S_{n}};\mu _{n}=n%
\right] <\infty .  \label{TT}
\end{equation}
\end{lemma}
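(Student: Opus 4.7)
The plan is to combine time-reversal with a meander-type estimate for the conditioned walk.

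\textbf{Part (ii).} I would first exploit the distributional equality $(S_n-S_{n-k})_{k=0}^n\stackrel{d}{=}(S_k)_{k=0}^n$. Under this reversal the event $\{\mu_n=n\}$, i.e.\ $S_n<S_k$ for $0\le k<n$, translates to $\{\tau^+>n\}$ with $\tau^+:=\min\{k\ge1:S_k\ge0\}$, while $\tilde S_n=S_n$ is preserved. This gives
\[
\mathbf{E}[e^{S_n};\mu_n=n]=\mathbf{E}[e^{S_n};\tau^+>n].
\]
To bound the right-hand side I would integrate by parts,
\[
\mathbf{E}[e^{S_n};\tau^+>n]=\int_0^\infty e^{-y}\,\mathbf{P}(-S_n\le y,\tau^+>n)\,dy,
\]
and upgrade the weak meander convergence $(-S_n/c_n\mid\tau^+>n)\Rightarrow$ meander of $-S$ at time $1$ to a uniform local-limit estimate of the form $\mathbf{P}(-S_n\le y,\tau^+>n)\lesssim\mathbf{P}(\tau^+>n)(y/c_n)^{\alpha(1-\rho)+1}$ valid for $y\lesssim c_n$. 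Integrating then yields $\mathbf{E}[e^{S_n};\tau^+>n]=O(\mathbf{P}(\tau^+>n)/c_n^{\alpha(1-\rho)+1})$, and the Rogozin-type identity $\mathbf{P}(\tau^+>n)\mathbf{P}(T^->n)\asymp 1/n$ combined with (\ref{asyma}) and (\ref{tails}) reduces this to the announced $O(1/(nc_n))$.

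\textbf{Part (i).} I would decompose over the position of the minimum. By the Markov property at $\mu_n=k$ the walk splits into independent pre- and post-minimum pieces, and time reversal on the pre-piece identifies $\{S_j>S_k\ \forall j<k\}$ with $\{\tau^+>k\}$ for an independent copy of $S$. This gives
\[
\mathbf{E}[e^{2L_n-S_n}]=\sum_{k=0}^n\mathbf{E}[e^{S_k};\tau^+>k]\cdot\mathbf{E}[e^{-S_{n-k}};T^->n-k].
\]
The first factor is exactly what was estimated in part~(ii); the second is its $(-S)$-dual, obtained by swapping $(S,\rho)$ with $(-S,1-\rho)$, and is handled by the same argument. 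Both are of order $1/((k+1)c_{k+1})$ and $1/((n-k+1)c_{n-k+1})$ respectively, so the convolution is dominated by the boundary terms $k\in\{0,n\}$, where one factor equals $1$, each contributing $O(1/(nc_n))$; the interior sum $\sum_{1\le k\le n-1}$ yields a smaller order.

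\textbf{Main obstacle.} The crux is the uniform local-limit estimate $\mathbf{P}(-S_n\le y,\tau^+>n)\lesssim\mathbf{P}(\tau^+>n)(y/c_n)^{\alpha(1-\rho)+1}$ down to $y=O(1)$: since $e^{-y}$ concentrates the integral on bounded $y$, the mere weak meander convergence is insufficient, and a Caravenna--Doney-style conditioned local limit theorem is needed, together with the behavior of the stable meander density near $0$. A secondary but nontrivial point is the bookkeeping of slowly varying factors, ensuring that the combination of $\mathbf{P}(\tau^+>n)$, $c_n^{\alpha(1-\rho)+1}$, and Rogozin's identity produces a clean $1/(nc_n)$ bound with no residual logarithmic factor.
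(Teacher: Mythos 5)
Your route and the paper's diverge completely. You decompose the path at its minimum and reduce both parts to a conditioned local limit theorem in the small-deviations regime (the estimate $\mathbf{P}(-S_n\le y,\tau^+>n)\lesssim \mathbf{P}(\tau^+>n)(y/c_n)^{\alpha(1-\rho)+1}$ down to $y=O(1)$). The paper instead applies Spitzer/Wiener--Hopf factorization identities (Theorems 8.9.1 and 8.9.3 in \cite{BGT}) to write
\[
\sum_{n\ge 0} r^{n}\,\mathbf{E}\bigl[e^{S_{n}};\mu _{n}=n\bigr]
=\exp\Bigl\{\sum_{n\ge 1}\tfrac{r^{n}}{n}\,\mathbf{E}\bigl[e^{S_{n}};S_{n}<0\bigr]\Bigr\}
\]
(and similarly for $e^{2L_n-S_n}$), bounds the inner coefficients by $C/(nc_n)$ using only the \emph{unconditioned} local limit theorem, and then invokes Rogozin's Tauberian theorem \cite{Rog76} to transfer the coefficient bound through the exponential. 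This completely bypasses any conditioned local limit theorem: no path decomposition at the minimum, no meander density, no time reversal.

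The gap you flag is genuine, and it is worth emphasizing that the paper's route is specifically designed to avoid it. Upgrading weak meander convergence to a pointwise bound at scale $y=O(1)$ is not a minor technical step but a Caravenna--Doney / Vatutin--Wachtel type theorem in its own right, and in its correct form the small-deviations estimate reads $\mathbf{P}(-S_n\in dy,\tau^+>n)\sim C\,V(y)/(nc_n)\,dy$ with $V$ the relevant renewal function, not the raw power $(y/c_n)^{\alpha(1-\rho)}$ you wrote; the two agree only up to an additional slowly varying factor whose control is precisely the ``bookkeeping'' issue you worry about, and your proposed intermediate bound does not obviously collapse to $O(1/(nc_n))$ without it. The paper's Abelian--Tauberian argument never encounters this difficulty because it only needs $\mathbf{P}(S_n=y)=O(1/c_n)$ uniformly in $y$, which is the classical Gnedenko LLT. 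Two smaller points: (a) in your convolution argument for part (i) the interior sum $\sum_{1\le k\le n-1}$ is of the \emph{same} order $O(1/(nc_n))$ as the boundary terms, not smaller — this doesn't hurt the bound but the claim as stated is wrong; (b) part (ii) for the paper is the shorter identity (8.9.1), so the paper's logical order (i) then (ii) is the reverse of yours, reflecting that it doesn't build one from the other.
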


\begin{proof}
By the factorization identity (see, for instance, Theorem 8.9.3 in \cite{BGT}%
) applied with $\lambda =-1$ and $\mu =1$ to $L_{n}$ instead of $%
M_{n}:=\max_{0\leq k\leq n}S_{k},$ we have%
\begin{equation*}
\sum_{n=0}^{\infty }r^{n}\mathbf{E}\left[ e^{2L_{n}-S_{n}}\right] =\exp
\left\{ \sum_{n=1}^{\infty }\frac{r^{n}}{n}\left( \mathbf{E}\left[
e^{-S_{n}};S_{n}\geq 0\right] +\mathbf{E}\left[ e^{S_{n}};S_{n}<0\right]
\right) \right\} .
\end{equation*}%
Since $X\in D(\alpha ,\beta ),$ the local limit theorem for asymptotically
stable random walks implies
\begin{equation}
\mathbf{E}\left[ e^{-S_{n}};S_{n}\geq 0\right] +\mathbf{E}\left[
e^{S_{n}};S_{n}<0\right] \leq \frac{C}{c_{n}}.  \label{UU}
\end{equation}%
Combining this with Theorem 6 in \cite{Rog76} gives%
\begin{equation*}
\limsup_{n\rightarrow \infty }nc_{n}\mathbf{E}\left[ e^{2L_{n}-S_{n}}\right]
<\infty ,
\end{equation*}%
proving the first statement of the lemma.

To prove the second it is sufficient to\ note (see, for instance, Theorem
8.9.1 in~\cite{BGT}) that
\begin{equation*}
\sum_{n=0}^{\infty }r^{n}\mathbf{E}\left[ e^{S_{n}};\mu _{n}=n\right] =\exp
\left\{ \sum_{n=1}^{\infty }\frac{r^{n}}{n}\mathbf{E}\left[ e^{S_{n}};S_{n}<0%
\right] \right\}
\end{equation*}%
and to use estimate (\ref{UU}) once again.
\end{proof}

The previous lemma allows us to prove the following statement.

\begin{lemma}
\label{LtailSer}If $X\in \mathcal{D}(\alpha ,\beta )$ with $\alpha<2$ and $%
\beta<1$, then for every $\varepsilon >0$ there exists a positive integer $l$
such that
\begin{equation*}
\sum_{k=l}^{n-1}\mathbf{E}\left[ e^{S_{k}};\mu _{k}=k\right] \mathbf{P}%
\left( T^{-}=n-k\right) \leq \varepsilon \mathbf{P}\left( T^{-}=n\right)
\end{equation*}
for all $n\geq l$.
\end{lemma}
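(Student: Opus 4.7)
The plan is to combine the upper bound $\mathbf{E}[e^{S_k};\mu_k=k]\leq C/(kc_k)$ from Lemma~\ref{LMinMax} (which by \eqref{TT} holds for $k$ beyond some threshold $k_0$) with the asymptotics $\mathbf{P}(T^-=m)\sim(1-\rho)l(m)/m^{2-\rho}$ from \eqref{**} and $c_n=n^{1/\alpha}l_1(n)$ from \eqref{asyma}, splitting the sum at $k=\lfloor n/2\rfloor$ and controlling the two halves separately.

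For the low range $l\leq k\leq\lfloor n/2\rfloor$ (with $l\geq k_0$), Potter's bounds applied to the regularly varying function $\mathbf{P}(T^-=\cdot)$ produce a constant $C_1$ such that $\mathbf{P}(T^-=n-k)\leq C_1\mathbf{P}(T^-=n)$ uniformly in this range once $n$ is sufficiently large. Consequently this portion of the sum is bounded by
\[
CC_1\,\mathbf{P}(T^-=n)\sum_{k\geq l}\frac{1}{kc_k},
\]
and the tail series converges because $c_k$ has regular variation index $1/\alpha>0$; thus by choosing $l$ large enough this low-range contribution is at most $(\varepsilon/2)\mathbf{P}(T^-=n)$.

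For the high range $\lfloor n/2\rfloor<k\leq n-1$, regular variation of $c_k$ yields $kc_k\geq C_2\,nc_n$, so
\[
\sum_{k=\lfloor n/2\rfloor+1}^{n-1}\frac{\mathbf{P}(T^-=n-k)}{kc_k}\leq\frac{C_3}{nc_n}\sum_{j=1}^{\lceil n/2\rceil}\mathbf{P}(T^-=j)\leq\frac{C_3}{nc_n}.
\]
The ratio of this bound to $\mathbf{P}(T^-=n)$ is regularly varying with index $1-\rho-1/\alpha$, which is strictly negative under the hypothesis of the lemma: for $\alpha\leq 1$ the inequality $1/\alpha\geq 1$ gives $\rho+1/\alpha>1$ trivially, while for $1<\alpha<2$ the explicit formula \eqref{ro} shows that $\rho$ is monotone in $\beta$ with $\rho=1-1/\alpha$ realized exactly at $\beta=1$, so the strict inequality $\beta<1$ forces $\rho+1/\alpha>1$. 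Hence this high-range contribution is $o(\mathbf{P}(T^-=n))$ and falls below $(\varepsilon/2)\mathbf{P}(T^-=n)$ for all $n\geq n_0$. Taking $l\geq\max(k_0,n_0,l_\ast)$, where $l_\ast$ is the threshold controlling the low range, and if necessary enlarging $l$ to absorb the finitely many small $n$ in the interval $[l,n_0]$, completes the proof. The main conceptual step is the identification of the strict inequality $\rho+1/\alpha>1$ as being precisely the content of the hypothesis $\beta<1$; once this is in place the remaining estimates are routine applications of regular variation and Potter's bounds.
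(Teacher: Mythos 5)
Your proof is correct and follows essentially the same strategy as the paper: split the sum at a point proportional to $n$, bound the low range using the convergence of $\sum_k\mathbf{E}[e^{S_k};\mu_k=k]$ together with the regular variation of $\mathbf{P}(T^-=\cdot)$, and bound the high range by $C/(nc_n)$, then observe this is $o(\mathbf{P}(T^-=n))$ precisely because $\rho+1/\alpha>1$. The paper uses a free parameter $\delta$ for the split point instead of fixing $n/2$, but this is a cosmetic difference; your explicit verification that $\rho+1/\alpha>1$ follows from $\beta<1$ via formula \eqref{ro} (with equality exactly at $\beta=1$) is a nice addition that the paper only asserts.
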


\begin{proof}
By Lemma \ref{LMinMax} and (\ref{**}) we have, for any $\delta \in (0,1)$,
\begin{eqnarray}
&&\sum_{k=l}^{n}\mathbf{E}\left[ e^{S_{k}};\mu _{k}=k\right] \mathbf{P}%
\left( T^{-}=n-k\right) \hspace{4cm}  \notag \\
&\leq &\max_{n\delta \leq j\leq n}\mathbf{P}\left( T^{-}=j\right)
\sum_{l\leq k\leq (1-\delta )}\mathbf{E}\left[ e^{S_{k}};\mu _{k}=k\right]
\notag \\
&&+\frac{C}{n(1-\delta )c_{n(1-\delta )}}\sum_{k\leq n\delta }\mathbf{P}%
\left( T^{-}=n-k\right) .  \label{H0}
\end{eqnarray}%
On account of (\ref{**}),
\begin{equation}
\max_{n\delta \leq j\leq n}\mathbf{P}\left( T^{-}=j\right) \leq C\delta
^{\rho -2}\frac{l(n)}{n^{2-\rho }}\leq C_{1}\delta ^{\rho -2}\mathbf{P}%
\left( T^{-}=n\right) .  \label{H1}
\end{equation}%
Using (\ref{ro}) it is not difficult to check that $1-\rho <\alpha ^{-1}$ if
Condition $A$ holds. With this in view we have, by (\ref{asyma}) and (\ref%
{**}),
\begin{equation}
\frac{1}{nc_{n(1-\delta )}}\leq \frac{C}{(1-\delta )^{1/\alpha
}n^{1+1/\alpha }l_{1}(n)}\leq \frac{C_{1}}{(1-\delta )^{1/\alpha
}n^{1/\alpha +\rho -1}}\mathbf{P}\left( T^{-}=n\right) .  \label{H2}
\end{equation}%
Substituting (\ref{H1}) and (\ref{H2}) in (\ref{H0}) gives%
\begin{eqnarray*}
&&\sum_{k=l}^{n}\mathbf{E}\left[ e^{S_{k}};\mu _{k}=k\right] \mathbf{P}%
\left( T^{-}=n-k\right)  \\
&&\qquad \leq C_{3}\mathbf{P}\left( T^{-}=n\right) \left( \delta ^{\rho
-2}\sum_{k=l}^{\infty }\mathbf{E}\left[ e^{S_{k}};\mu _{k}=k\right] +\frac{1%
}{(1-\delta )^{1+1/\alpha }n^{1/\alpha +\rho -1}}\right)
\end{eqnarray*}%
for sufficiently large $C_{3}.$ Recalling now (\ref{TT}), we complete the
proof of the lemma by an appropriate choice of $\delta $ and $l$.
\end{proof}


\section{Proof of Theorem~\protect\ref{T1}}

Set%
\begin{equation*}
F_{m,n}(s):=f_{m}(f_{1}(\ldots (f_{n-1}(0))\ldots )),\,m<n,\quad
F_{n,n}(s):=s.
\end{equation*}%
Rewriting (\ref{geom}) as
\begin{equation*}
\frac{1}{1-f_{n-1}(s)}=1+e^{-X_{n}}\frac{1}{1-s}\quad \text{for all }n\geq 1,
\end{equation*}%
one can easily get the representation
\begin{equation}
\frac{1}{1-F_{0,n}(s)}=1+e^{-S_{1}}+e^{-S_{2}}+\ldots
+e^{-S_{n-1}}+e^{-S_{n}}\frac{1}{1-s}  \label{ddd}
\end{equation}%
for all $n\geq 1$. From this equality, setting
\begin{equation*}
H_{n}:=\left( \sum_{k=0}^{n}e^{-S_{k}}\right) ^{-1},\,\quad H_{\infty
}:=\lim_{n\rightarrow \infty }H_{n},
\end{equation*}%
we get
\begin{equation*}
\mathbf{P}_{f}(Z_{n}>0):=\mathbf{P}(Z_{n}>0|f_0,f_1,%
\ldots,f_{n-1})=1-F_{0,n}(0)=H_{n}
\end{equation*}%
and
\begin{equation}  \label{condPr}
\mathbf{P}_{f}(T=n):=\mathbf{P}_{f}(Z_{n-1}>0)-\mathbf{P}%
_{f}(Z_{n}>0)=H_{n-1}H_{n}e^{-S_{n}}.
\end{equation}

We split the expectation $\mathbf{E}\left[ \mathbf{P}_{f}(T=n)\right] =%
\mathbf{P}\left( T=n\right) $ into two parts:
\begin{equation}
\mathbf{P}\left( T=n\right) =\mathbf{E}[\mathbf{P}_{f}(T=n);\mu _{n}<n]+%
\mathbf{E}[\mathbf{P}_{f}(T=n);\mu _{n}=n].  \label{1}
\end{equation}%
One can easily verify that $H_{n-1}H_{n}e^{-S_{n}}\leq e^{2L_{n}-S_{n}}$ on
the event $\{\mu _{n}<n\}$. From this bound and Lemma \ref{LMinMax} we infer
\begin{equation}
\mathbf{E}[\mathbf{P}_{f}\left( T=n\right) ;\mu _{n}<n]\leq \mathbf{E}%
e^{2L_{n}-S_{n}}\leq \frac{C}{nc_{n}}\quad \text{for all }n\geq 1.  \label{2}
\end{equation}%
Using estimate (\ref{H2}) with $\delta =0$, we conclude
\begin{equation}
\mathbf{E}[\mathbf{P}_{f}\left( T=n\right) ;\mu _{n}<n]=o\left( \mathbf{P}%
\left( T^{-}=n\right) \right) .  \label{SSS}
\end{equation}%
Consider now the expectation $\mathbf{E}[\mathbf{P}_{f}\left( T=n\right)
;\mu _{n}=n]$. Applying Lemma \ref{LMinMax} once again, we see that
\begin{eqnarray*}
\mathbf{E}\left[ \mathbf{P}_{f}(Z_{n}>0);\mu _{n}=n\}\right]  &\leq &\mathbf{%
E}\left[ e^{S_{n}};\mu _{n}=n\}\right]  \\
&\leq &\frac{C}{nc_{n}}=o\left( \mathbf{P}\left( T^{-}=n\right) \right) .
\end{eqnarray*}%
Thus,
\begin{equation}
\mathbf{E}[\mathbf{P}_{f}\left( T=n\right) ;\mu _{n}=n\}]=\mathbf{E}[\mathbf{%
P}_{f}(Z_{n-1}>0);\mu _{n}=n\}]+o(\mathbf{P}(T^{-}=n)).  \label{9}
\end{equation}%
Since $\mathbf{P}_{f}(Z_{n-1}>0)\leq e^{\min_{0\leq j\leq n-1}S_{j}},$ we
have by Lemma \ref{LtailSer} that for any $\varepsilon >0$ there exists $l$
such that for all $n>l$
\begin{eqnarray}
\mathbf{E}[\mathbf{P}_{f}(Z_{n-1} &>&0);\mu _{n-1}\geq l,\mu _{n}=n\}]
\notag \\
&=&\sum_{k=l}^{n-1}\mathbf{E}[\mathbf{P}_{f}(Z_{n-1}>0);\mu _{n-1}=k,\mu
_{n}=n\}]  \notag \\
&\leq &\sum_{k=l}^{n-1}\mathbf{E}[e^{S_{k}};\mu _{n-1}=k,\mu _{n}=n\}]\leq
\varepsilon \mathbf{P}(T^{-}=n)  \label{RRR}
\end{eqnarray}%
for all $n\geq l$. Denoting by $\left\{ \tilde{f}_{n},n\geq 0\right\} $ a
probabilistic and independent copy of $\left\{ f_{n},n\geq 0\right\} $ we
have, for any fixed $k<l$,
\begin{align*}
\mathbf{E}[\mathbf{P}_{f}& (Z_{n-1}>0);\mu _{n-1}=k,\mu _{n}=n\}] \\
& =\mathbf{E}\left[ \left( 1-F_{0,n-2}(0)\right) ;\mu _{n-1}=k,\mu _{n}=n\}%
\right]  \\
& =\mathbf{E}\left[ \left( 1-F_{0,k}(F_{k,n-2}(0))\right) ;\mu _{n-1}=k,\mu
_{n}=n\}\right]  \\
& =\mathbf{E}\left[ \left( 1-F_{0,k}(\tilde{F}_{0,n-k-2}(0))\right) ;\mu
_{k}=k,\tilde{T}^{-}=n-k\}\right]  \\
& =\mathbf{E}\left[ \left( 1-F_{0,k}(\tilde{F}_{0,n-k-2}(0))\right) \mathit{1%
}\{\mu _{k}=k\}|\tilde{T}^{-}=n-k\right] \mathbf{P}\left( T^{-}=n-k\right) .
\end{align*}%
By monotonicity of the extinction probability and Lemma 2.7 in \cite{AGKV05},%
\begin{equation*}
\lim_{n\rightarrow \infty }\tilde{F}_{0,n}(0)=:Q^{+}<1\quad \mathbf{P}^{+}%
\text{-a.s.}
\end{equation*}%
Hence, in view of (\ref{FFin}) we get for any fixed $k<l:$
\begin{eqnarray*}
&&\mathbf{E}\left[ \left( 1-F_{0,k}(\tilde{F}_{0,n-k-2}(0))\right) \mathit{1}%
\{\mu _{k}=k\}|\tilde{T}^{-}=n-k\right]  \\
&&\qquad \qquad \qquad \sim \mathbf{E}\left[ \mathbf{E}^{+}\left[ \left(
1-F_{0,k}(Q^{+})\right) \right] ;\mu _{k}=k\right] >0.
\end{eqnarray*}%
Using this relation, (\ref{**}) and (\ref{RRR}) it is not difficult to show
that%
\begin{equation}
\mathbf{E}[\mathbf{P}_{f}(Z_{n-1}>0);\mu _{n}=n]\sim \theta \mathbf{P}\left(
T^{-}=n\right) ,  \label{SSemi}
\end{equation}%
where%
\begin{equation}
\theta =\sum_{k=0}^{\infty }\mathbf{E}\left[ \mathbf{E}^{+}\left(
1-F_{0,k}(Q^{+})\right) ;\mu _{k}=k\right] >0.  \label{NUU}
\end{equation}

It follows from (\ref{1}), (\ref{SSS}), (\ref{9}) and (\ref{SSemi}) that%
\begin{equation*}
\mathbf{P}\left( T=n\right) \sim \theta \mathbf{P}\left( T^{-}=n\right) .
\end{equation*}%
It is easy to check that the expression for $\theta $ given by (\ref{NUU})
is in complete agreement with formula (4.10) in \cite{AGKV05}. This finishes
the proof of Theorem \ref{T1}.

\section{Proofs for the general case}

\textit{Proof of Theorem~\ref{T3}}. First we obtain lower and upper bounds
for the probability $\mathbf{P}(Z_{1}=0|Z_{0}=k)$. It is easy to see that
(recall (\ref{DefF}))
\begin{equation*}
f_{00}=\mathbf{P}(Z_{1}=0|Z_{0}=k;f_{0})\geq \max \left\{
0,1-\sum_{k=1}^{\infty }kf_{0k}\right\} .
\end{equation*}%
Therefore, for any fixed $\varepsilon \in \left( 0,1/2\right) $,
\begin{align}
\mathbf{P}(Z_{1}=0|Z_{0}=k)& \geq \mathbf{E}[(1-e^{X_{1}})^{k};X_{1}<0]
\notag \\
& \geq (1-k^{-1-\varepsilon })^{k}\mathbf{P}(X_{1}\leq -(1+\varepsilon )\log
k).  \label{Llow}
\end{align}%
To get an upper estimate we use the inequality $\mathbf{P}(Y>0)\geq \left(
\mathbf{E}Y\right) ^{2}/\mathbf{E}Y^{2},$ being valid for any nonnegative
random variables with $\mathbf{E}Y>0,$ to conclude that
\begin{equation*}
f_{00}\leq 1-\left( \frac{\sum_{k=1}^{\infty }k^{2}f_{0k}}{\left(
\sum_{k=1}^{\infty }kf_{0k}\right) ^{2}}\right) ^{-1}.
\end{equation*}%
Observing that%
\begin{equation*}
\frac{\sum_{k=1}^{\infty }k^{2}f_{0k}}{\left( \sum_{k=1}^{\infty
}kf_{0k}\right) ^{2}}\leq \frac{b}{\sum_{k=1}^{\infty }kf_{0k}}+\zeta (b),
\end{equation*}%
we get%
\begin{equation*}
f_{00}\leq 1-\left( be^{-X_{1}}+\zeta (b)\right) ^{-1}\leq \exp \left\{ -%
\frac{1}{be^{-X_{1}}+\zeta (b)}\right\} .
\end{equation*}%
This implies
\begin{eqnarray}
\mathbf{P}(Z_{1} &=&0|Z_{0}=k)\leq \mathbf{E}\left[ \exp \left\{ -\frac{1}{%
be^{-X_{1}}+\zeta (b)}\right\} \right]   \notag \\
&\leq &\mathbf{P}(X_{1}\leq -(1-\varepsilon )\log k)+\mathbf{P}\left( \zeta
(b)>k^{1-\varepsilon }\right) +\exp \left\{ -\frac{k^{\varepsilon }}{b+1}%
\right\} .  \label{Upper2}
\end{eqnarray}%
In view of the hypothesis $\mathbf{E}\left( \log ^{+}\zeta (b)\right)
^{\alpha +\delta }<\infty $ and the Markov inequality we have%
\begin{equation}
\mathbf{P}\left( \zeta (b)>k^{1-\varepsilon }\right) \leq C\log ^{-\alpha
-\delta }k.  \label{Upper3}
\end{equation}%
Since the probability $\mathbf{P}\left( X_{1}<-x\right) $ is regularly
varying with index $-\alpha $, estimates (\ref{Llow})- (\ref{Upper3}) imply
\begin{align*}
\frac{1}{\left( 1+\varepsilon \right) ^{\alpha }}& \leq
\liminf_{k\rightarrow \infty }\frac{\mathbf{P}(Z_{1}=0|Z_{0}=k)}{\mathbf{P}%
\left( X_{1}<-\log k\right) } \\
& \quad \leq \limsup_{k\rightarrow \infty }\frac{\mathbf{P}(Z_{1}=0|Z_{0}=k)%
}{\mathbf{P}\left( X_{1}<-\log k\right) }\leq \frac{1}{\left( 1-\varepsilon
\right) ^{\alpha }}.
\end{align*}%
Letting $\varepsilon \rightarrow 0$ gives%
\begin{equation*}
\lim_{k\rightarrow \infty }\frac{\mathbf{P}(Z_{1}=0|Z_{0}=k)}{\mathbf{P}%
\left( X_{1}<-\log k\right) }=1.
\end{equation*}%
Therefore,
\begin{align}
\mathbf{P}(Z_{n-1}>e^{xc_{n}};T=n)& =\sum_{k>e^{xc_{n}}}\mathbf{P}(Z_{n-1}=k)%
\mathbf{P}(Z_{1}=0|Z_{0}=k)  \notag \\
& \sim \sum_{k>e^{xc_{n}}}\mathbf{P}(Z_{n-1}=k)\mathbf{P}(X\leq -\log k)
\notag \\
& \sim \mathbf{E}[\mathbf{P}(X\leq -\log Z_{n-1});\log Z_{n}>xc_{n}].
\label{New0}
\end{align}%
Since
\begin{equation*}
\frac{\mathbf{P}(X\leq -yc_{n})}{\mathbf{P}(X\leq -c_{n})}\rightarrow
y^{-\alpha }\text{ \ as \ }n\rightarrow \infty
\end{equation*}%
uniformly in $y\in \lbrack x,\infty )$, we have
\begin{align}
& \mathbf{E}[\mathbf{P}(X\leq -\log Z_{n-1});\log Z_{n-1}>xc_{n}]  \notag \\
& \sim \mathbf{P}(X\leq -c_{n})\mathbf{E}\left[ \left( \frac{\log Z_{n-1}}{%
c_{n}}\right) ^{-\alpha };\log Z_{n-1}>xc_{n}\right]   \notag \\
& \sim \mathbf{P}(X\leq -c_{n})\mathbf{P}(Z_{n-1}>0)\mathbf{E}\left[ \left(
\frac{\log Z_{n-1}}{c_{n}}\right) ^{-\alpha }\mathit{1}\left\{ \log
Z_{n-1}>xc_{n}\right\} \Big|Z_{n-1}>0\right] .  \label{New1}
\end{align}%
By Corollary~1.6 in \cite{AGKV05},%
\begin{equation*}
\lim_{n\rightarrow \infty }\mathbf{P}\left( \frac{\log Z_{n-1}}{c_{n}}<x\Big|%
Z_{n-1}>0\right) =\mathbf{P}\left( \Lambda _{1}<x\right) ,\ x>0.
\end{equation*}%
This and the dominated convergence theorem yield%
\begin{equation}
\lim_{n\rightarrow \infty }\mathbf{E}\left[ \left( \frac{\log Z_{n-1}}{c_{n}}%
\right) ^{-\alpha }\mathit{1}\left\{ \log Z_{n-1}>xc_{n}\right\} \Big|%
Z_{n-1}>0\right] =\mathbf{E}\left[ \Lambda _{1}^{-\alpha }\mathit{1}\left\{
\Lambda _{1}>x\right\} \right]   \label{New2}
\end{equation}%
Combining (\ref{New0})-(\ref{New2}) and taking into account (\ref{Asx}) and (%
\ref{AsE}), we obtain%
\begin{eqnarray*}
\mathbf{P}(Z_{n-1} &>&e^{xc_{n}};Z_{n}=0)\sim (1-\rho )\frac{\mathbf{P}%
(Z_{n-1}>0)}{n}\frac{\mathbf{E}\left[ \Lambda _{1}^{-\alpha }\mathit{1}%
\left\{ \Lambda _{1}>x\right\} \right] }{\mathbf{E}\left[ \Lambda
_{1}^{-\alpha }\right] } \\
&=&(1-\rho )\frac{\mathbf{P}(Z_{n-1}>0)}{n}\mathbf{P}\left( \tilde{\Lambda}%
_{1}>x\right) .
\end{eqnarray*}%
To complete the proof of Theorem \ref{T3} it remains to note that
\begin{equation*}
(1-\rho )\frac{\mathbf{P}(Z_{n-1}>0)}{n}\sim (1-\rho )\frac{\theta \mathbf{P}%
(T^{-}>n-1)}{n}\sim \theta \mathbf{P}(T^{-}=n)
\end{equation*}%
in view of (\ref{tails}) and (\ref{**}).

\textit{Proof of Theorem \ref{T5}}. Let $\phi $ be an arbitrary bounded
continuous function from $D\left[ 0,1\right] $ and let
\begin{equation*}
Z^{(n)}=\left\{ \frac{\log Z_{\left[ (n-1)t\right] }}{c_{n}},0\leq t\leq
1\right\} .
\end{equation*}%
As in the proof of Theorem~\ref{T3}, for any $x>0$,%
\begin{eqnarray}
&&\left. \sum_{k>e^{xc_{n}}}\mathbf{E}\left[ \phi \left( Z^{(n)}\right)
;Z_{n-1}=k\right] \mathbf{P}(Z_{1}=0|Z_{0}=k)\right.   \notag \\
&\sim &\mathbf{P}(X\leq -c_{n})\mathbf{E}\left[ \phi \left( Z^{(n)}\right)
\left( \frac{\log Z_{n-1}}{c_{n}}\right) ^{-\alpha }\mathit{1}\left\{
Z_{n-1}>e^{xc_{n}}\right\} \right]   \notag \\
&\sim &\theta \mathbf{P}(T^{-}=n)\mathbf{E}\left[ \phi \left( Z^{(n)}\right)
\left( \frac{\log Z_{n-1}}{c_{n}}\right) ^{-\alpha }\mathit{1}\left\{
Z_{n-1}>e^{xc_{n}}\right\} \Big|\,Z_{n-1}>0\right]   \notag \\
&\sim &\mathbf{P}(T=n)\frac{\mathbf{E}\left[ \phi \left( \Lambda \right)
\Lambda _{1}^{-\alpha }1\left\{ \Lambda _{1}>x\right\} \right] }{\mathbf{E}%
\left[ \Lambda _{1}^{-\alpha }\right] } \\
&=&\mathbf{P}(T=n)\mathbf{E}\left[ \phi \left( \tilde{\Lambda}\right)
\mathit{1}\left\{ \tilde{\Lambda}_{1}>x\right\} \right] ,  \label{Func}
\end{eqnarray}%
where in the last step we have used Corollary 1.6 in \cite{AGKV05}.

On the other hand, according to (\ref{ttt}),
\begin{align}
& \left. \sum_{0<k\leq e^{xc_{n}}}\mathbf{E}\left[ \phi \left(
Z^{(n)}\right) ;Z_{n-1}=k\right] \mathbf{P}(Z_{1}=0|Z_{0}=k)\right.  \notag
\\
& \leq \sup \left\vert \phi \right\vert \mathbf{P}\left( 0<Z_{n-1}\leq
e^{xc_{n}};Z_{n}=0\right) =o\left( \mathbf{P}(T=n)\right)  \label{fff}
\end{align}%
as $x\downarrow 0$. Combining (\ref{Func}) and (\ref{fff}), we get%
\begin{equation*}
\lim_{n\rightarrow \infty }\mathbf{E}\left[ \phi \left( Z^{(n)}\right) |T=n%
\right] \mathbf{=}\lim_{x\downarrow 0}\mathbf{E}\left[ \phi \left( \tilde{%
\Lambda}\right) \mathit{1}\left\{ \tilde{\Lambda}_{1}>x\right\} \right] =%
\mathbf{E}\left[ \phi \left( \tilde{\Lambda}\right) \right]
\end{equation*}%
completing the proof of Theorem \ref{T5}.

\textit{Proof of Corollary \ref{Col1}}. Letting $x\rightarrow 0$ in (\ref%
{T3.2}), we get%
\begin{equation}
\liminf_{n\rightarrow \infty }\frac{\mathbf{P}(T=n)}{\mathbf{P}(T^{-}=n)}%
\geq \theta .  \label{Lala}
\end{equation}%
Assuming that there exists $\varepsilon >0$ such that
\begin{equation*}
\mathbf{P}(T=n)\geq \left( \theta +\varepsilon \right) \mathbf{P}(T^{-}=n)
\end{equation*}%
for all $n\geq N$ and summing over $n$ from arbitrary $n_{0}>N$ to $\infty$,
we deduce
\begin{equation*}
\mathbf{P}(T\geq n_{0})\geq \left( \theta +\varepsilon \right) \mathbf{P}%
(T^{-}\geq n_{0})
\end{equation*}%
for all $n_{0}\geq N$, that contradicts (\ref{tails}).

\textit{Proof of }(\ref{eee}). Representation (\ref{ddd}) implies%
\begin{equation*}
\mathbf{P}\left( Z_{n-1}=j\right) =\mathbf{E}\left[ H_{n-1}^{2}e^{-S_{n-1}}%
\left( 1-H_{n-1}e^{-S_{n-1}}\right) ^{j-1}\right] ,\,j\geq 1.
\end{equation*}%
Hence, by Lemma \ref{LMinMax},
\begin{eqnarray*}
\sup_{j\geq 1}\mathbf{P}\left( Z_{n-1}=j\right)  &\leq &\mathbf{E}\left[
H_{n-1}^{2}e^{-S_{n-1}}\right]  \\
&\leq &\mathbf{E}\left[ e^{2L_{n-1}-S_{n-1}}\right] \leq \frac{C}{nc_{n}}%
\leq \frac{C_{1}}{\sigma n^{3/2}},
\end{eqnarray*}%
where in the last step we have used the equality $c_{n}\sim \sigma \sqrt{n}$%
. Now we see that%
\begin{eqnarray}
\mathbf{P}\left( Z_{n-1}>N;T=n\right)  &=&\sum_{j=N+1}^{\infty }\mathbf{P}%
\left( Z_{n-1}=j\right) \mathbf{E}\left[ f_{00}^{j}\right]   \notag \\
&\leq &\frac{C_{1}}{\sigma n^{3/2}}\sum_{j=N+1}^{\infty }\mathbf{E}\left[
f_{00}^{j}\right] .  \label{kkk}
\end{eqnarray}%
According to Theorem 1 in \cite{VD97} the conditions $\sigma ^{2}<\infty $
and (\ref{add}) yield $\mathbf{P}\left( T=n\right) \sim Cn^{-3/2}$. From
this estimate, the first condition in (\ref{add}), and (\ref{kkk}) we get (%
\ref{eee}).

\end{document}